\setlist[description]{font=\normalfont}
\def\QED{$\blacksquare$}
\def\inQED{$\square$}
\renewenvironment{proof}
{\vspace{1ex}\noindent{\bf Proof.}\hspace{0.5em}}{\hfill \QED \vspace{1ex}}
\newenvironment{proofof}[1]
{\vspace{1ex}\noindent{\bf Proof of #1.}\hspace{0.5em}}{\hfill \QED \vspace{1ex}}
\theoremstyle{plain}
\newtheorem{theorem}{Theorem}[section]
\newtheorem{lemma}[theorem]{Lemma}
\newtheorem{proposition}[theorem]{Proposition}
\newtheorem{observation}[theorem]{Observation}
\newtheorem{remark}[theorem]{Remark}
\newcommand{\sm}{\setminus}
\DeclareMathOperator{\Ex}{\mathbb{E}}
\renewcommand{\Pr}{\mathbb{P}}
\def\moverlay{\mathpalette\mov@rlay}
\def\mov@rlay#1#2{\leavevmode\vtop{%
   \baselineskip\z@skip \lineskiplimit-\maxdimen
   \ialign{\hfil$\m@th#1##$\hfil\cr#2\crcr}}}
\newcommand{\charfusion}[3][\mathord]{
    #1{\ifx#1\mathop\vphantom{#2}\fi
        \mathpalette\mov@rlay{#2\cr#3}
      }
    \ifx#1\mathop\expandafter\displaylimits\fi}
\newcommand{\discup}{\charfusion[\mathbin]{\cup}{\cdot}}
\newcommand{\bigdiscup}{\charfusion[\mathop]{\bigcup}{\cdot}}
\let\eps=\varepsilon
\let\theta=\vartheta
\let\rho=\varrho
\let\phi=\varphi
\renewcommand*{\eqref}[1]{%
  \hyperref[{#1}]{\textup{\tagform@{\ref*{#1}}}}%
}
\title{Minors, connectivity, and diameter \\ in randomly perturbed sparse graphs}
\author{Elad Aigner-Horev \thanks{School of Computer Science, Ariel University, Ariel 40700, Israel. Email: {\tt horev@ariel.ac.il}.}
\quad Dan Hefetz \thanks{School of Computer Science, Ariel University, Ariel 40700, Israel. Email: {\tt danhe@ariel.ac.il}.}
\quad Michael Krivelevich \thanks{School of Mathematical Sciences, Tel Aviv University, Tel Aviv 6997801, Israel. 
Email: {\tt krivelev@tauex.tau.ac.il}. Research supported in part by USA--Israel BSF grant 2018267.}}
\begin{document}
\date{}
\maketitle

\begin{abstract}
Extremal properties of sparse graphs, randomly perturbed by the binomial random graph are considered. It is known that every $n$-vertex graph $G$ contains a complete minor of order $\Omega(n/\alpha(G))$. We prove that adding $\xi n$ random edges, where $\xi > 0$ is arbitrarily small yet fixed, to an $n$-vertex graph $G$ satisfying $\alpha(G) \leq \zeta(\xi)n$ asymptotically almost surely results in a graph containing a complete minor of order $\tilde \Omega \left( n/\sqrt{\alpha(G)}\right)$; this result is tight up to the implicit logarithmic terms. 

For complete topological minors, we prove that there exists a constant $C>0$ such that adding $C n$ random edges to a graph $G$ satisfying $\delta(G) = \omega(1)$, asymptotically almost surely results in a graph containing a complete topological minor of order $\tilde \Omega(\min\{\delta(G),\sqrt{n}\})$; this result is tight up to the implicit logarithmic terms. 

Finally, extending results of Bohman, Frieze, Krivelevich, and Martin for the dense case, we analyse the asymptotic behaviour of the vertex-connectivity and the diameter of randomly perturbed sparse graphs. 
\end{abstract}

\section{Introduction}

Given an $n$-vertex graph $G$ and a distribution $\mathcal{R}_n$ over all $n$-vertex graphs, the union $G \cup R$ with $R \sim \mathcal{R}_n$ defines a distribution over the $n$-vertex supergraphs of $G$, referred to as the {\em random perturbation} of $G$ (with respect to $\mathcal{R}_n$). More generally, given a family of $n$-vertex graphs $\mathcal{G}_n$, we say that $\mathcal{G}_n \cup \mathcal{R}_n$ asymptotically almost surely (a.a.s. hereafter) satisfies a given property $\mathcal{P}$, if $\lim_{n \to \infty} \Pr[G \cup R \in \mathcal{P}] = 1$, whenever $G \in \mathcal{G}_n$ and $R \sim \mathcal{R}_n$. We say that $\mathcal{G}_n \cup \mathcal{R}_n$ a.a.s. does not satisfy $\mathcal{P}$, if there exists a graph $G \in \mathcal{G}_n$ such that $\lim_{n \to \infty} \Pr[G \cup R \in \mathcal{P}] = 0$, whenever $R \sim \mathcal{R}_n$. Given any graph parameter $f(\cdot)$, we write $f(G \cup \mathbb{G}(n,p))$ to denote the random variable $f(G \cup R)$ where $R \sim \mathbb{G}(n,p)$. 

The study of randomly perturbed graphs dates back to the work of Bohman, Frieze, and Martin~\cite{BFM03} who proved that $\mathcal{G}_{n,\delta} \cup \mathbb{G}(n,\Theta_\delta(n^{-1}))$ is a.a.s. Hamiltonian, where $\delta > 0$ is a constant, $\mathcal{G}_{n,\delta}$ denotes the family of $n$-vertex graphs of minimum degree $\delta(G) \geq \delta n$, and $\mathbb{G}(n,p)$ denotes the binomial random graph with edge-probability $p$. Soon afterwards Krivelevich, Sudakov, and Tetali~\cite{KST} studied the Ramsey properties of $\mathcal{G}_{n,d} \cup \mathbb{G}(n,p)$, where $\mathcal{G}_{n,d}$ denotes the family of $n$-vertex graphs with edge density at least $d $, and $d > 0$ is independent of $n$. 

The two aforementioned results mark the initiation of the two most dominant strands of research pertaining to randomly perturbed (hyper)graphs; these being the study of the emergence of spanning configurations in such (hyper)graphs~~\cite{AHK,BTW17,BHKM18,BHKMPP18,BMPP18,DRRS18,DiazGeo, DiazPower, DiazReg,HZ18,KKS16,KKS17,MM18} and the study of their (anti-)Ramsey properties~\cite{ADHLlarge,ADHLsmall,AHhamilton,AHTrees,AHP,DKM21,DMT20,DT19,Powierski19}. Modulo few exceptions, in the results comprising these two strands, the (hyper)graphs being perturbed are dense and the random perturbation is binomial. 

\bigskip

In the present paper, we analyse the asymptotic behaviour of the order of the largest complete minor, the order of the largest complete topological minor, the vertex-connectivity, and the diameter of (possibly sparse) graphs that are randomly perturbed using the binomial random graph. All of our results, to be detailed next, are not far from being tight.

A graph $H$ is said to be a {\sl minor} of a graph $G$ if $H$ can be obtained from a subgraph of $G$ through a series of edge-contractions. The {\em Hadwiger number} of a graph $G$, denoted $h(G)$, is the largest integer $r$ such that $G$ contains a $K_r$-minor. Hadwiger's conjecture~\cite{Hadwiger} stipulates that $h(G) \geq \chi(G)$ holds for every graph $G$. Coupled with the trivial bound $\chi(G) \geq n/\alpha(G)$, Hadwiger's conjecture, if true, implies that $h(G) \geq \lceil \frac{n}{\alpha(G)} \rceil$. Conjecturing that $h(G) \geq \lceil \frac{n}{\alpha(G)} \rceil$ thus forms a natural relaxation of Hadwiger's conjecture. The weaker bound $h(G) \geq \left\lceil \frac{n}{2\alpha(G)-1} \right\rceil$ was proved by Duchet and Meyniel~\cite{DM82}. 
%and was subsequently improved~\cite{BLW11,BK11,Fox, KPT05,KS07,MM87,PT10, PST03, Wood, Woodall}. 
Following a rather lengthy series of improvements, the current state of the art is due to Balogh and Kostochka~\cite{BK11} who proved that $h(G) \geq \left\lceil\frac{n}{(2-c)\alpha(G)} \right\rceil$ holds where $c = (80 - \sqrt{5392})/126 > 0.052$ is some constant. 

%the two conjectures coincide if $\alpha(G) =2$~\cite{PST03}

Our first result asserts that adding $(1/2+o(1))n$ random edges to an $n$-vertex graph $G$ a.a.s. results in a graph $H$ for which $h(H) = \tilde \Omega\left(n/\sqrt{\alpha(G)}\right)$ holds.

%where $\alpha(G)$ denotes the {\em independence number} of $G$ which denotes the size of a largest independent set in $G$. 

\begin{theorem} \label{thm:main-minors}
For every $\varepsilon > 0$, there exists an $\alpha > 0$ such that
\begin{equation}\label{eq:minor-order}
h(G \cup \mathbb{G}(n,p)) = \Omega\left(\frac{n}{\sqrt{\log n}\cdot \max\left\{\sqrt{\alpha(G)},\; \log n\right\}}\right)
\end{equation}
holds a.a.s., whenever $p := p(n) = \frac{1 + \varepsilon}{n}$ and $G$ is an $n$-vertex graph satisfying $\alpha(G) \leq \alpha n$.
\end{theorem}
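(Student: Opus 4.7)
The plan is to construct a $K_r$-minor in $G \cup R$ of the claimed order, where $R \sim \mathbb{G}(n,p)$ with $p = (1+\varepsilon)/n$, by carving a long connected backbone coming from the random part into branch sets whose pairwise cross-edges are supplied by $G$. The first step is to exploit the supercriticality of $R$ to locate, a.a.s., a path $P \subseteq R$ of length $\ell = \Theta(\varepsilon^2 n)$ inside the giant component; this is a standard consequence of the DFS analysis of $\mathbb{G}(n,(1+\varepsilon)/n)$. Setting $s := \lceil C \max\{\sqrt{\alpha(G)\log n},\log n\}\rceil$ for a suitably large constant $C = C(\varepsilon)$ and $r := \lfloor \ell/s \rfloor$, I would partition $P$ into $r$ consecutive sub-paths $S_1,\ldots,S_r$ of length $s$. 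Each $S_i$ is connected in $R \subseteq G \cup R$ and the $S_i$ are pairwise disjoint, so they are candidate branch sets whose number matches the claimed bound.

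It then remains to produce, for every pair $i \neq j$, an edge of $G \cup R$ joining $S_i$ and $S_j$. Since $\alpha(G) \leq \alpha n$, Tur\'an's theorem applied to $\overline G$ yields $e(G) \geq n^2/(2\alpha(G))$. If one could treat the $S_i$ as if they were uniformly random disjoint $s$-subsets of $V(G)$, then for a fixed pair $\{S_i,S_j\}$ the expected number of $G$-edges between them would be of order $s^2 e(G)/n^2 = \Omega(s^2/\alpha(G)) = \Omega(\log n)$; a Chernoff-type concentration combined with a union bound over $\binom{r}{2} \leq n^2$ pairs would then yield a cross-edge for every pair a.a.s. The $\sqrt{\log n}$ factor in the theorem's denominator is precisely the price of this union bound, and the $\log n$ lower bound inside $\max\{\sqrt{\alpha(G)},\log n\}$ ensures that $s^2/\alpha(G) \geq \log n$ even when $\alpha(G)$ is very small.

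The main obstacle is rigorously justifying the ``uniformly random subset'' heuristic, since the $S_i$ depend measurably on the realization of $R$ through $P$. I would handle this by sprinkling: split $p = p_1 + p_2$ with $p_1 = (1+\varepsilon/2)/n$ and $p_2 = \Theta(1/n)$, so that $R = R_1 \cup R_2$ with $R_i \sim \mathbb{G}(n,p_i)$ independent; use $R_1$ (which is already supercritical) to construct $P$ and the segmentation, and then invoke the vertex-symmetry of this $R_1$-measurable construction --- equivalently, apply a uniformly random permutation of $V$ before running the algorithm --- to argue that, given its block-size profile, the ordered tuple $(S_1,\ldots,S_r)$ distributes as a uniform equi-sized partition of a (random) $\ell$-subset of $V$. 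The edges of $G$ are then distributed ``randomly'' against this tuple, the Chernoff/union-bound argument goes through, and $R_2$ can be kept in reserve to handle a small number of pairs not covered by $G$-edges, or to arrange for $P$ to be nearly spanning in the giant (where the Tur\'an bound $e(G[V(P)]) = \Omega(|V(P)|^2/\alpha(G))$ is needed). One should also verify that the argument works in both regimes $\alpha(G) \geq \log^2 n$ and $\alpha(G) < \log^2 n$, corresponding to the two branches of the $\max$ in the theorem's denominator.
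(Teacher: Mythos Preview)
Your high-level strategy --- find a long path in $R$, cut it into equal segments, exploit the permutation-invariance of $\mathbb{G}(n,p)$ to treat the segment vertex-sets as uniformly random disjoint subsets, and then connect them pairwise via $G$-edges --- is exactly the paper's. The paper also invokes the random-permutation argument you describe (and no sprinkling into $R_1 \cup R_2$ is needed: since $\mathbb{G}(n,p)$ is invariant under vertex permutations, any subgraph found in $R$ is automatically distributed as a uniformly random copy of itself, so the segment vertex-sets form what the paper calls a $(k,\ell)$-ensemble).

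There is one structural difference worth noting. You aim to show that \emph{every} pair $S_i,S_j$ is joined by a $G$-edge, directly producing a $K_r$-minor, and accordingly take $s \asymp \sqrt{\alpha(G)\log n}$ so as to survive a union bound over all $\binom{r}{2}$ pairs. The paper instead takes shorter segments of length $\ell \asymp \max\{\sqrt{\alpha(G)},\log n\}$, proves only that each segment is $G$-adjacent to a \emph{constant fraction} of the others (its Lemma~2.2), obtains a minor $H$ on $2k$ vertices with $\Omega(k^2)$ edges, and then extracts a complete minor of order $\Omega(k/\sqrt{\log k})$ via Kostochka--Thomason. Both routes reach the stated bound, with the $\sqrt{\log n}$ loss entering at the corresponding step in each.

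The genuine gap in your proposal is the ``Chernoff-type concentration'' for $e_G(S_i,S_j)$. Here $G$ is a fixed deterministic graph and the randomness lies only in the choice of $S_i,S_j$; the edge count $e_G(S_i,S_j)$ is not a sum of independent indicators, and its lower tail is governed by the degree distribution of $G$, not merely by $e(G)$. The Tur\'an bound $e(G) \geq n^2/(2\alpha(G))$ gives the expectation you compute but no usable tail bound. What actually works --- and this is the substance of the paper's key Lemma~2.2 --- is to lower-bound $|N_G(S_i)|$ directly: since in every induced subgraph of $G$ all but an $\eta$-fraction of the vertices have degree at least $\lfloor \eta n/\alpha(G)\rfloor$ (the paper's Observation~2.1), an iterative exposure of the vertices of $S_i$ shows that a.a.s.\ each $S_i$ satisfies $|N_G(S_i)| = \Omega(n/\ell)$, after which $\Pr[S_j \cap N_G(S_i) = \emptyset]$ is a clean hypergeometric tail. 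Your direct route can be completed with this neighbourhood argument in place of the edge-count heuristic, but the Tur\'an-plus-Chernoff step as written is not a proof; the reserve perturbation $R_2$ does not rescue it either, since $O(n)$ random edges cannot be expected to cover $\Theta(r^2)$ specific missing pairs.
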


%\begin{remark}
%{\em Throughout, all logarithms are taken in the natural base.} 
%\end{remark}

%\begin{theorem}\label{thm:main-minors-sparse}
%For every $\xi > 0 $ there exists an $\alpha >0 $ such that 
%$$
%h(G \cup \mathbb{G}(n,p)) = \Omega\left(\frac{n}{\sqrt{\log n}\cdot \max\left\{\sqrt{\alpha(G)},\; \log n\right\}}\right)
%$$
%holds a.a.s. whenever $p:=p(n) := \xi/n$ and $G$ is an $n$-vertex graph satisfying $\alpha(G) \leq \alpha n$. 
%\end{theorem}

%\begin{remark} {\em For instance, the $n$-vertex graph $G$ comprised of $k$ vertex-disjoint cliques, each of order $n/k$, satisfies $h(G) = n/k$. Theorem~\ref{thm:main-minors} asserts that letting $k = \omega \left(\log^2 n \right)$ and adding slightly more than $n/2$ random edges to $G$ a.a.s. results in an $n$-vertex graph $H$ satisfying $h(H) = \Omega\left(\frac{n}{\sqrt{k \log n}}\right)$.}
%\end{remark}

%Theorem~\ref{thm:main-minors} may be uninformative if $\alpha(G) = o \left(\log^2n \right)$. Indeed, in this case, it ensures the existence of a complete minor of order $\Omega\left(\frac{n}{(\log n)^{3/2}} \right)$ which may be smaller than, say, $\lceil \frac{n}{2\alpha(G)-1} \rceil$.

Theorem~\ref{thm:main-minors} is tight up to logarithmic terms. To see this, let $G$ be an $n$-vertex graph comprised of $n/(k+1)$ vertex-disjoint cliques, each of size $k+1$; note that $e(G) = nk/2$ and $\alpha(G)=n/(k+1)$. Let $R \sim \mathbb{G}(n,p)$, where $p := p(n) \leq \frac{k}{2n}$. Then, a.a.s. $e(R) \leq n k/2$ entailing $e(G \cup R) \leq nk/2 + n k/2 = nk$. Any graph $H$ that is a minor of $G \cup R$ must then satisfy $e(H) \leq e(G \cup R) \leq nk$. In particular, $h(G \cup R) = O(\sqrt{nk})  = O\left(n/\sqrt{\alpha(G)}\right)$. The logarithmic terms appearing in Theorem~\ref{thm:main-minors} are artefacts of our approach; it would be interesting to know whether these can be improved upon or removed.

Most vertices of an $n$-vertex graph $G$ are of degree $\Omega\left(n/\alpha(G) \right)$ (see Observation~\ref{obs:dev-avg-degree} in the next section). Hence, imposing an upper bound on the independence number of a graph yields a lower bound on its average degree. It is then reasonable to contemplate whether replacing upper bounds on $\alpha(G)$ with corresponding lower bounds on the average degree of $G$ in Theorem~\ref{thm:main-minors} would yield similar results. The following example demonstrates that this is far from the truth. Let $G = (A \discup B, E)$ be a complete bipartite graph, where $k := |A| = \sqrt{n}$ and $|B| = n-k$; note that $\delta(G) = k$ and $h(G) = k+1$. For $h(G \cup \mathbb{G}(n,p)) = \omega(k)$ to hold a.a.s., $h(\mathbb{G}(n,p)[B]) = \omega(k)$ must hold a.a.s. as well. Indeed, at most $k$ branch sets may intersect $A$ and any two branch sets that are fully contained in $B$ may only be connected via a vertex of $A$ or an edge of $\mathbb{G}(n,p)$ . Having $h(\mathbb{G}(n,p)[B]) = \omega(k)$ requires the random perturbation to be of size $\omega\left(k^2\right) = \omega(n)$; this, however, is a.a.s. not the case for $p = O(1/n)$. To summarise, adding $\Omega(n)$ random edges to an $n$-vertex graph with minimum degree $\sqrt{n}$ may have insignificant impact on its Hadwiger number; on the other hand, by Theorem~\ref{thm:main-minors}, adding $(1/2 + o(1)) n$ random edges to any $n$-vertex graph with independence number $\sqrt{n}$ (and thus average degree $\Omega(\sqrt{n})$) a.a.s. yields a graph with Hadwiger number $\Omega \left(\frac{n^{3/4}}{\sqrt{\log n}} \right)$. 

Given a constant $C >1$, it is known by a result in~\cite{FKS08}, that $h(\mathbb{G}(n,p)) = \Theta(\sqrt{n})$ holds a.a.s. whenever $p:=p(n) = C/n$. Consequently, 
a complete minor of order $\Theta(\sqrt{n})$ typically resides in the perturbed graph considered in Theorem~\ref{thm:main-minors} by virtue of the random edges alone. Therefore, for Theorem~\ref{thm:main-minors} to be informative, the right hand side of~\eqref{eq:minor-order} must dominate $\sqrt{n}$. This imposes that the stronger bound $\alpha(G) = O(n/\log n)$ be upheld for the theorem to be meaningful. Our second result addresses this issue. More importantly, it allows for sparser perturbations at the expense of obtaining a smaller (yet, still non-trivial) complete minor.

% by considering perturbations that use fewer random edges; as a special case, it asserts that for every $\xi >0$ there exists an $\alpha > 0$ such that $h(G \cup \mathbb{G}(n,\xi/n)) = \tilde \Omega(n/\sqrt{\alpha(G)})$ holds a.a.s. whenever $G$ is an $n$-vertex graph satisfying $\alpha(G) \leq \alpha n$.} 

\begin{theorem}\label{thm:main-minors-sparse}
There exists a constant $c > 0$ such that given an integer $k := k(n) = o(n)$, the bound
$$
h(G \cup \mathbb{G}(n,p)) = \Omega\left(\frac{n}{\sqrt{\log n}\cdot \max\left\{\sqrt{\alpha(G)},\; \log n\right\}\cdot k}\right)
$$
holds a.a.s. whenever $G$ is an $n$-vertex graph satisfying $\alpha(G) \leq \frac{c n}{k}$ and $p:= p(n) = \frac{8}{nk}$. 
\end{theorem}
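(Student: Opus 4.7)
The plan is to follow the same template as for Theorem~\ref{thm:main-minors}: (i) extract from $G$ a family of $r$ pairwise vertex-disjoint, $(G\cup R)$-connected ``branch sets'' of common size $s := n/r$, and (ii) show that $R \sim \mathbb{G}(n,p)$ a.a.s.\ supplies an edge between every pair of branch sets. The natural choice of parameters is
\[
r \;:=\; \Theta\!\left(\frac{n}{\sqrt{\log n}\cdot \max\{\sqrt{\alpha(G)},\log n\}\cdot k}\right),\qquad s \;:=\; n/r,
\]
so that $r$ matches the claimed bound on $h(G\cup\mathbb{G}(n,p))$.

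The probability-theoretic step is clean. The probability that $R$ contains no edge between a fixed pair of branch sets of size $s$ each is at most $(1-p)^{s^2} \leq e^{-ps^2}$. Plugging in $p = 8/(nk)$ and the above $s$, one obtains $ps^2 = 8k\cdot\max\{\alpha(G),\log^2 n\}\cdot\log n/n$, which---under the hypothesis $\alpha(G) \leq cn/k$ and in the main regime where $\alpha(G)$ is a constant fraction of its upper bound---is $\Omega(\log n)$ with implicit constant proportional to the constant $c$. A union bound over the $\binom{r}{2}\leq n^2$ pairs of branch sets then succeeds a.a.s., provided $c$ is chosen to be a sufficiently large absolute constant.

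The main technical obstacle I foresee is step (i): extracting the branch sets themselves. The hypothesis $\alpha(G) \leq cn/k$ yields average degree $\Omega(k)$ in $G$, and by the standard Erd\H{o}s dense-subgraph trick $G$ contains a subgraph $G^*$ on $\Omega(n)$ vertices with $\delta(G^*) = \Omega(k)$. However, $s$ may be substantially larger than $k$, and $G^*$ is not guaranteed to contain connected subgraphs of size $s$ (consider the case of $G^*$ being the disjoint union of $\Theta(n/k)$ cliques of size $\Theta(k)$). To circumvent this, one first partitions $V(G^*)$ into connected ``chunks'' of size $\Theta(k)$---feasible since the number of components of $G^*$ is at most $\alpha(G^*) \leq cn/k$---and then merges $\Theta(s/k)$ chunks into each branch set, using random edges of $R$ to glue the chunks together. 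A secondary application of the above probability estimate, this time applied to the (much smaller) chunk-graph, makes the gluing step go through.

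Finally, in the residual parameter regime where $\alpha(G)$ is significantly smaller than its permitted upper bound (specifically, $\alpha(G) = O(k^2\log n)$), the Duchet--Meyniel bound $h(G) \geq \lceil n/(2\alpha(G)-1)\rceil$ applied to $G$ alone already matches the target order up to constants, and no random edges are needed. Combining this with the probabilistic argument above in the complementary regime covers the entire parameter range and yields Theorem~\ref{thm:main-minors-sparse}.
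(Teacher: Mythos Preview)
Your two-regime split does not cover the full range of $\alpha(G)$. The probability step needs $ps^2$ to exceed $2\log n$ for the union bound over $\binom{r}{2}$ pairs; with your parameters $ps^2 = \Theta\!\left(k\,\alpha(G)\log n/n\right)$, so this only succeeds when $\alpha(G) = \Omega(n/k)$. On the other side, Duchet--Meyniel gives $h(G)\geq n/(2\alpha(G))$, which meets the target $\Omega\!\left(n/(\sqrt{\alpha(G)\log n}\cdot k)\right)$ only when $\alpha(G) = O(k^2\log n)$. These two regimes overlap only if $k^3\log n \gtrsim n$; for small $k$ (say $k = n^{0.1}$) there is a vast intermediate range, e.g.\ $\alpha(G)=\sqrt{n}$, where neither argument applies. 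In that range your branch sets have size $s = \tilde\Theta(k\sqrt{\alpha(G)})$, so $ps^2 = \tilde\Theta(k\alpha(G)/n) = o(1)$ and the union bound fails by a wide margin, while Duchet--Meyniel is off by a factor of roughly $\sqrt{\alpha(G)}/k$.

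The paper avoids this by not aiming for a complete minor directly. It finds $r = n/(2k)$ vertex-disjoint paths of length $k-1$ in $G$ (using that $\alpha(G)\leq cn/k$ forces large subgraphs of minimum degree $\geq k$), contracts each path to a vertex, and observes that the resulting auxiliary graph $\Gamma_G$ on $r$ vertices still has $\alpha(\Gamma_G)\leq\alpha(G)\leq \alpha r$, while the contraction of $R\sim\mathbb{G}(n,8/(nk))$ dominates $\mathbb{G}(r,(1+\varepsilon)/r)$. Theorem~\ref{thm:main-minors} then applies to $\Gamma_G\cup\Gamma_R$ as a black box. The point is that Theorem~\ref{thm:main-minors} internally uses Kostochka--Thomason to pass from a \emph{dense} minor to a complete one, and this is what lets the argument work uniformly across all admissible $\alpha(G)$; your direct approach forfeits exactly this flexibility. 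Your ``gluing chunks with random edges'' idea is in fact headed towards the same auxiliary-graph reduction, but to make it work you would need to exploit the deterministic edges of the chunk graph (i.e.\ its low independence number), not just the random ones --- which is precisely the reduction to Theorem~\ref{thm:main-minors}.
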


%\rednote{?Applying the Kostochka-Thomason theorem to graphs $G$ satisfying $\delta(G) \geq n/\alpha(G)$ yields $h(G) = \Omega\left(\frac{n}{\alpha \sqrt{\log n}}\right)$. For graphs $G$ satisfying $\alpha(G) = \omega(\log^2 n)$, Theorem~\ref{thm:main-minors} overshoots this bound by a factor of $\sqrt{\alpha(G)}$ a.a.s. upon adding slightly more than $(1/2+o(1))n$ random edges.? - SHOULD THIS BE MENTIONED? If so, then move KT theorem upwards.}  

%adding $\Omega(n)$ random edges to an $n$-vertex graph with minimum degree $\sqrt{n}$, may have essentially no impact on its Hadwiger number. On the other hand, by Theorem~\ref{thm:main-minors}, adding $(1/2 + o(1)) n$ random edges to any $n$-vertex graph with independence number $\sqrt{n}$ (and thus average degree $\Omega(\sqrt{n})$) a.a.s. yields a graph with Hadwiger number $\Omega \left(\frac{n^{3/4}}{\sqrt{\log n}} \right)$. 

%If $\Delta$ is independent of $n$, then~\eqref{eq:bounded-degree} yields $h(G \cup \mathbb{G}(n,\Theta(n^{-1}))) = \Omega(\sqrt{n})$ which is best possible up to the implicit constant. 

\bigskip

Hadwiger's conjecture has attracted a lot of attention and significant progress towards its resolution has been made over the years. A classical result by Kostochka~\cite{Kos84} and by Thomason~\cite{Tho95} asserts that {\sl every} graph $G$ satisfies $h(G) = \Omega\left(\frac{\overline{d}(G)}{\sqrt{\log (\overline{d}(G))}} \right)$, where $\bar d(G)$ denotes the average degree of $G$; a short proof of this theorem can be found in~\cite{AKS22}. Building on the work of Norin, Postle, and Song~\cite{NPS19,Postle} (and references therein), a major breakthrough was recently obtained by Delcourt and Postle~\cite{DP22} who proved that any graph $G$ with chromatic number $r$ satisfies $h(G) =\Omega\left(r/\log \log r \right)$. 

%(the Kostochka-Thomasson Theorem, hereafter)

For the binomial random graph $\mathbb{G}(n,p)$, Hadwiger's conjecture is known to be true a.a.s. essentially throughout the whole range of $p := p(n)$(see~\cite{FKS08,KN21} and references therein). 
%~\cite{BCE80,FKS08,K18,KS09,L91,LPW94} (see~\cite[Lemma~2.10]{FK} as well). 
In fact, these results assert that almost all graphs $G$ satisfy
$$
h(G) = \Omega\left(\sqrt{\frac{e(G)}{\log (\overline{d}(G))}}\right) \;\; \;  \text{and} \; \; \; \chi(G) = O\left(\frac{\overline{d}(G)}{\log (\overline{d}(G))} \right),
$$
implying that Hadwiger's conjecture holds true for almost all graphs in a rather strong sense. %, where here $\bar d(G)$ denotes the average degree of $G$. 

Fountoulakis, K\"uhn and Osthus~\cite{FKS09} proved that the random $3$-regular graph $\mathbb{G}_{n,3}$ a.a.s. satisfies 
$h(\mathbb{G}_{n,3}) = \Theta(\sqrt{n})$. Their proof exploits the fact that $\mathbb{G}_{n,3}$ can be generated by first sampling a Hamilton cycle $C$ over $[n]$ uniformly at random and then randomly perturbing $C$ using a perfect matching, sampled uniformly at random from all perfect matchings over $V(C)$. As far as we know, this is the first result regarding the Hadwiger number of randomly perturbed graphs. 

More recently, Kang, Kang, Kim, and Oum~\cite{KKKO20} proved that for every $\omega(n^{-2}) = p := p(n) \leq 2/n$ there exists a constant $C>0$ such that given an $n$-vertex connected graph $G$ of maximum degree $\Delta \leq pn^2/C$, a.a.s. 
\begin{equation*} %\label{eq:bounded-degree}
h(G \cup \mathbb{G}(n,p)) = \Omega\left(\min\left\{\sqrt{\frac{pn^2}{\log \Delta}}, \frac{pn^2}{\Delta \sqrt{\log \Delta}} \right\} \right)
\end{equation*}
holds. 

In light of the aforementioned results, it is natural to wonder whether perturbed variants of Hadwiger's conjecture can be proved, by which we mean proving that $h(G \cup \mathbb{G}(n,p)) \geq \chi(G \cup \mathbb{G}(n,p))$ holds a.a.s. for any sufficiently large $n$-vertex graph $G$ and non-trivial $p:=p(n)$. A graph $G$ is said to be \emph{vertex-transitive} if for any pair of vertices $(u, v) \in V(G) \times V(G)$, there is an automorphism of $G$ which maps $u$ to $v$. Our third result is a  consequence of Theorem~\ref{thm:main-minors}; it asserts that a rather modest random perturbation is sufficient in order to a.a.s. generate a graph satisfying Hadwiger's conjecture in a fairly strong sense provided that the graph being perturbed is vertex-transitive. 

\begin{proposition} \label{th::HadwigerVT}
Let $8 < k := k(n)$ be an integer. Then, there exist constants $c_1, c_2 > 0$ such that $G \cup \mathbb{G}(n,p)$ a.a.s. satisfies Hadwiger's conjecture,  whenever $G$ is a vertex-transitive graph on $n$ vertices satisfying $c_1 k^2 \log^3 n \leq \alpha(G) \leq \frac{c_2 n}{k}$, and $p :=p(n) = \frac{8}{nk}$.
\end{proposition}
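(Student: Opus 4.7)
The plan is to combine the Hadwiger lower bound supplied by Theorem~\ref{thm:main-minors-sparse} with a matching upper bound on $\chi(G\cup R)$, where $R\sim\mathbb{G}(n,p)$; vertex-transitivity will enter only to control $\chi(G)$, while the chosen value of $p$ will keep $R$ subcritical. Since $\alpha(G)\ge c_1 k^2\log^3 n\ge \log^2 n$, the maximum in the denominator of Theorem~\ref{thm:main-minors-sparse} is attained by $\sqrt{\alpha(G)}$, yielding
\[
h(G\cup R)\;=\;\Omega\!\left(\frac{n}{k\sqrt{\alpha(G)\log n}}\right)\quad\text{a.a.s.}
\]

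For the chromatic number I would use the sub-multiplicativity $\chi(G\cup R)\le \chi(G)\,\chi(R)$ (colour each vertex by the pair formed from its colours in a proper colouring of $G$ and a proper colouring of $R$). To bound $\chi(G)$, I would exploit vertex-transitivity: the orbit of a maximum independent set $I_0$ under $\mathrm{Aut}(G)$ covers each vertex of $G$ the same number of times, so assigning weight $n/(|\mathrm{Aut}(G)|\,\alpha(G))$ to each translate $\phi(I_0)$ gives a valid fractional colouring of $G$ of total weight $n/\alpha(G)$, whence $\chi_f(G)=n/\alpha(G)$; combined with the standard integrality-gap estimate $\chi(H)\le (1+\ln\alpha(H))\,\chi_f(H)$ (greedy set cover by independent sets), this yields $\chi(G)=O(n\log n/\alpha(G))$. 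To bound $\chi(R)$, observe that $np=8(n-1)/(nk)<1$ since $k>8$, so $\mathbb{G}(n,p)$ is subcritical and a.a.s.\ every connected component of $R$ is either a tree or unicyclic; hence $\chi(R)\le 3$, and consequently $\chi(G\cup R)=O(n\log n/\alpha(G))$ a.a.s.

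The required inequality $h(G\cup R)\ge \chi(G\cup R)$ thus reduces to
\[
\frac{n}{k\sqrt{\alpha(G)\log n}}\;\gtrsim\;\frac{n\log n}{\alpha(G)},
\]
equivalently $\alpha(G)\gtrsim k^2\log^3 n$, which is guaranteed by the hypothesis $\alpha(G)\ge c_1 k^2\log^3 n$ provided $c_1$ is chosen sufficiently large to absorb the hidden constants; the constant $c_2$ is inherited directly from Theorem~\ref{thm:main-minors-sparse}. The main point requiring care is the fractional-chromatic identity $\chi_f(G)=n/\alpha(G)$ for vertex-transitive $G$, which is the only place the transitivity hypothesis enters in an essential way; the remainder of the argument reduces to our earlier Hadwiger lower bound together with standard facts about subcritical $\mathbb{G}(n,p)$.
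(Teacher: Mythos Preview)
Your proposal is correct and follows essentially the same route as the paper: apply Theorem~\ref{thm:main-minors-sparse} for the lower bound on $h(G\cup R)$, use sub-multiplicativity $\chi(G\cup R)\le\chi(G)\chi(R)$ together with $\chi(R)\le 3$ (subcritical random graph) and the vertex-transitive bound $\chi(G)=O\bigl(n\log n/\alpha(G)\bigr)$, and then compare. The only cosmetic difference is that the paper quotes the bound $\chi(G)\le(1+\log n)\,n/\alpha(G)$ for vertex-transitive $G$ as a black box from the fractional graph theory literature, whereas you spell out its proof via $\chi_f(G)=n/\alpha(G)$ and the greedy set-cover integrality gap; these are the same argument.
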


\begin{remark}
{\em For a significant range of admissible values of $k$, the perturbed graph $G \cup \mathbb{G}(n,p)$, considered in Proposition~\ref{th::HadwigerVT}, satisfies $h(G \cup \mathbb{G}(n,p)) = \omega(\chi(G \cup \mathbb{G}(n,p)))$ asymptotically almost surely.}
\end{remark}

%\begin{remark}
%{\em Letting $p := p(n) \geq (1 + \varepsilon)/n$ in our proof  of Theorem~\ref{th::HadwigerVT}, yields the more revealing parametric bound $\alpha(G) = \omega \left(\frac{n^2 p^2 \log^3 n}{\log^2(np)} \right)$.}
%\end{remark}

%\begin{theorem} \label{th::HadwigerVT}
%Let $\varepsilon > 0$ be fixed and let $G$ be a vertex-transitive graph on $n$ vertices. Then, a.a.s. $h(G \cup \mathbb{G}(n,p)) = \omega(\chi(G \cup \mathbb{G}(n,p)))$, whenever $p := p(n) \geq (1 + \varepsilon)/n$ and $\alpha(G) = \omega \left(\frac{n^2 p^2 \log^3 n}{\log^2(np)} \right)$.
%\end{theorem}
%
%\begin{remark}
%{\em The lower bound on $\alpha(G)$ seen in Theorem~\ref{th::HadwigerVT} imposes $p = \tilde O(n^{-1/2})$. Note further that here interest lies in the entire range of $p$ (and not merely in its lowest end) for which Hadwiger's conjecture holds post-perturbation; indeed, it is not a-priori clear whether the inequality $h(G \cup \mathbb{G}(n,p)) \geq \chi(G \cup \mathbb{G}(n,p))$ is monotone in $p$; in particular, the equality $h(G \cup \mathbb{G}(n,p)) = \omega(\chi(G \cup \mathbb{G}(n,p)))$ is false for $p=1$.} 
%\end{remark}

A graph $H$ is said to be a {\em subdivision} of a graph $K$ if $H$ can be obtained from $K$ by repeatedly subdividing\footnote{The act of subdividing an edge $e$ in a graph entails the removal of $e$ and the addition of a degree two vertex whose sole neighbours are the ends of $e$.} edges.  A graph $K$ is said to be a {\em topological minor} of a graph $G$ if $G$ contains a subdivision of $K$ as a subgraph. For a graph $G$, let $\mathrm{tcl}(G)$ denote the largest integer $r$ such that $G$ contains a subdivision of $K_r$. 

The order of magnitude of $\mathrm{tcl}(\mathbb{G}(n,p))$ is known provided that $p := p(n) \geq C/n$, where $C>1$ is independent of $n$~\cite{AKS79,BC81}. Roughly stated, these results collectively assert that $\mathrm{tcl}(\mathbb{G}(n,p)) = \Theta(\min\{pn,\sqrt{n}\})$ holds a.a.s., whenever $p$ is as above; the first term arises due to the triviality $\mathrm{tcl}(G) \leq \Delta(G) + 1$ holding for any graph $G$; the second term accounts for the so-called space limitation that obstructs the accommodation of large complete topological minors~\cite{BC81,EF81}. Our fourth result reads as follows. 

\begin{theorem}\label{thm:tcl}
There exists a constant $C>0$ such that $\mathrm{tcl}(G \; \cup \; \mathbb{G}(n,p)) \geq  \min \left\{ \delta(G)/8, \sqrt{n/60 \log_2 n}\right\}$ holds a.a.s. whenever $p:=p(n) \geq C/n$ and $G$ is an $n$-vertex graph satisfying $\delta(G)  = \omega(1)$.
\end{theorem}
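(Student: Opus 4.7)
Set $r := \lfloor \min\{\delta(G)/8, \sqrt{n/(60\log_2 n)}\} \rfloor$; the goal is to embed a subdivision of $K_r$ in $G \cup R$, where $R \sim \mathbb{G}(n,p)$. We may assume $r = \omega(1)$. The strategy follows the classical Ajtai--Koml\'os--Szemer\'edi / Bollob\'as--Catlin template: locate $r$ branch vertices in $G$, equip each with $r-1$ pairwise-distinct pilot neighbours, and route $\binom{r}{2}$ internally vertex-disjoint short paths in $R$ joining corresponding pilot pairs through a large reservoir $U \subseteq V(G)$.

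\textbf{Branch vertices and pilots.} Select branch vertices $v_1, \ldots, v_r$ greedily, at each step preferring a vertex whose $G$-neighbourhood substantially enlarges the running union. Since $\delta(G) \geq 8r$, each $v_i$ has at least $7r$ admissible $G$-neighbours outside $\{v_1, \ldots, v_r\}$. An application of Hall's theorem to the bipartite incidence graph between the $r(r-1)$ pilot slots and the admissible vertices then produces an injective pilot assignment $w_{i,j} \in N_G(v_i)$, provided the $v_i$'s are chosen so that $|\bigcup_{i \in I} N_G(v_i) \setminus \{v_1, \ldots, v_r\}| \geq (r-1)|I|$ for every $I \subseteq [r]$. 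In adversarial $G$'s where this condition fails (see the discussion of obstacles below), either re-select branch vertices to improve the spread, or compensate the at-most-$o(r^2)$ deficient slots using random neighbours from $R$. Let $U := V(G) \setminus (\{v_i\} \cup \{w_{i,j}\})$; then $|U| \geq n - r^2 \geq (1 - o(1))n$.

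\textbf{Routing paths.} Sprinkle $R = R_1 \cup R_2$ as the union of two independent random graphs each of density $p' = \Theta(1/n)$. For each pilot $w_{i,j}$, grow a BFS tree $T_{i,j} \subseteq R_1[U \cup \{w_{i,j}\}]$ rooted at $w_{i,j}$. By the standard expansion property of $\mathbb{G}(n, c/n)$ for $c$ large enough --- every set $S$ with $|S| \leq n/2$ satisfies $|N_{R_1}(S)| \geq c' |S|$ w.h.p.\ for some $c' > 1$ --- each $T_{i,j}$ reaches at least $n/4$ leaves after $L = O(\log n)$ levels, simultaneously for all $r^2$ pilots by a union bound. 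For each pair $\{i,j\}$, the expected number of $R_2$-edges between the leaves of $T_{i,j}$ and those of $T_{j,i}$ is $\Omega(n^2 p') = \Omega(n)$, so such an edge exists w.h.p.; concatenating tree branches with it produces a path $P_{i,j}$ of length at most $2L+1$ from $w_{i,j}$ to $w_{j,i}$. Process the pairs $\{i,j\}$ sequentially; after embedding $P_{i,j}$, delete its $\leq 2L$ internal vertices from $U$ and from every surviving BFS tree. Since the cumulative internal-vertex count is at most $\binom{r}{2} \cdot 2L \leq r^2 L = O(n/60)$, the reservoir and surviving trees stay large enough (of size $\geq n/3$) for each subsequent step to succeed. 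Together, the concatenations $v_i\,w_{i,j}\,P_{i,j}\,w_{j,i}\,v_j$ furnish the required $\binom{r}{2}$ internally vertex-disjoint $v_iv_j$-paths, completing the $K_r$-subdivision.

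\textbf{Main obstacle.} The most delicate step is the pilot assignment. An adversarial $G$ can force the branch vertices to share a small common $G$-neighbourhood that cannot accommodate $r(r-1)$ distinct pilots --- e.g., when $G$ is a near-bipartite graph $K_{8r, n-8r}$ and the $v_i$'s are inadvertently taken in the larger side --- breaking Hall's condition for $G$-pilots alone. Circumventing this requires selecting the $v_i$'s to produce diverse neighbourhoods (via a combinatorial argument tailored to $G$'s structure) and, as a last resort, substituting the residual deficient pilots by random $R$-neighbours of the affected $v_i$'s. A secondary technical burden is to verify that all the w.h.p.\ events (expansion of the $r^2$ BFS trees, existence of the $\binom{r}{2}$ connecting $R_2$-edges, and persistence under the sequential vertex deletions) hold jointly; the constant $60$ in $\sqrt{n/(60\log_2 n)}$ is calibrated precisely so that $r^2 \cdot L \leq n/60$, leaving enough reservoir for every path.
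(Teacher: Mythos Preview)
Your overall template --- branch vertices, pilot neighbours, route short paths through a reservoir --- matches the paper's, but two steps in your execution are genuine gaps at density $p = C/n$.

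\textbf{Pilot assignment.} You acknowledge that Hall's condition can fail, and propose as a fallback to ``substitute the residual deficient pilots by random $R$-neighbours of the affected $v_i$'s''. At $p = C/n$ the expected $R$-degree of any vertex is the constant $C$, so a branch vertex $v_i$ cannot supply $\omega(1)$ random pilots; this fallback is unavailable. The paper sidesteps the Hall argument entirely via a short combinatorial lemma: any graph with $\delta(G) \geq s$ and $s^2 \leq n$ contains $s/8$ pairwise vertex-disjoint stars $K_{1,s/4}$ (proved by a max-cut plus greedy argument). Applied with $s = 8\ell$ this yields $\ell$ disjoint stars $K_{1,2\ell}$, so each branch vertex comes equipped with $2\ell$ private leaves --- twice as many pilots as needed, all distinct, with no Hall-type condition to verify.

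\textbf{BFS expansion.} You claim that in $\mathbb{G}(n, c/n)$ ``every set $S$ with $|S| \leq n/2$ satisfies $|N_{R_1}(S)| \geq c'|S|$ w.h.p.'' and then take a union bound over the $r^2$ pilots. This expansion statement is false for small $S$: a fixed vertex is isolated in $R_1$ with probability roughly $e^{-c}$, a positive constant independent of $n$. Hence each individual BFS tree $T_{i,j}$ fails to launch with constant probability, and the union bound over $r^2 = \omega(1)$ trees does not go through. The paper's remedy has two parts. First, it proves (via an edge-count/sparse-cut argument) a \emph{deterministic} property of $R$: every set $U \subseteq [n]$ of size $\geq 0.9n$ contains $U' \subseteq U$ with $|U'| \geq 0.8n$ and $\mathrm{diam}(R[U']) \leq 3\log_2 n$. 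Routing is then done entirely inside the current $U'$, so no BFS from a single pilot is needed. Second, to connect a pilot to $U'$, one exposes its $R$-edges into $U'$; the success probability is at least $1 - (1 - C/n)^{0.8n} \geq 3/4$. A failed pilot is simply discarded and the next leaf of the same star is tried. Since each star has $2\ell$ leaves but only $\ell - 1$ successes are required, a Chernoff bound over $\mathrm{Bin}(2\ell, 1/4)$ shows that a.a.s.\ no star exhausts its leaves. Your scheme, with exactly $r-1$ pilots per branch vertex, has no such slack to absorb the constant-probability failures.
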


Up to logarithmic terms, Theorem~\ref{thm:tcl} is tight with respect to both lower bounds proclaimed in its statement. Indeed, let $k := k(n)$ and $p := p(n)$ be such that $\frac{2\log n}{n} \leq p = o(k/n)$. Let $G$ be a $k$-regular graph on $n$ vertices and let $R \sim \mathbb{G}(n,p)$. Then a.a.s. $\Delta(R) = o(k)$ implying that $\mathrm{tcl}(G \cup R) \leq \Delta(G \cup R) +1 = (1+o(1))k$. This establishes the tightness of Theorem~\ref{thm:tcl} with respect to the minimum degree of $G$. As for the second bound, let $G \sim \mathbb{G}(n,1/3)$ and $R \sim \mathbb{G}(n,o(1))$. Then, $G \cup R \sim \mathbb{G}(n, p)$ for some $1/3 \leq p \leq 1/2$. It then follows by a classical result of Erd\H{o}s and Fajtlowicz~\cite{EF81} (see~\cite{BC81} as well) that $\mathrm{tcl}(G \cup R) = O(\sqrt{n})$.

\bigskip

Our next result provides estimates for the vertex-connectivity of randomly perturbed graphs. It is well-known that $p = (\log n + (k-1) \log \log n)/n$ is the threshold for $\mathbb{G}(n,p)$ being $k$-vertex-connected (and also for being $k$-edge-connected and for having minimum degree $k$). %~\cite[Theorem~4.3]{FK}. 

%and the property $k$ vertices coincides with the threshold of $k$-connectivity. More precisely, writing $\kappa(G)$ to denote the vertex-connectivity of a graph $G$, it is known that $\kappa(\mathbb{G}(n,p)) \geq k$ holds a.a.s. whenever $p := p(n) \geq (\log n + (k-1)\log \log n + \Omega(1))/n$

While there are disconnected $n$-vertex graphs $G$ satisfying $\delta(G) = \Omega(n)$, Bohman, Frieze, Krivelevich, and Martin~\cite{BFKM04} proved that adding $\omega(1)$ random edges to such a graph $G$ a.a.s. results in a graph $H$ satisfying $\kappa(H) \geq k$, provided that $k$ is independent of $n$. For $\omega(1) = k  = O(n)$, they proved that $\Omega(k)$ random edges suffice in order to a.a.s. obtain a $k$-connected graph (consult~\cite{BFKM04} regarding the implicit constants in the Big O and Big $\Omega$ notation); their results are tight in terms of the number of random edges needed. Our fifth result provides an extension of these results by accommodating the random perturbation of sparse graphs. 

\begin{theorem} \label{th::VertexCon}
Let $n$, $k := k(n)$ and $s := s(n)$ be positive integers satisfying $k \leq s/17$. 
\begin{itemize}
\item [$(a)$] Let $G = (V,E)$ be an $n$-vertex graph satisfying $\delta(G) = s$, where $s = o(n)$. Then, a.a.s. $\kappa(G \cup {\mathbb G}(n,p)) \geq k$ holds, whenever $p := p(n) \geq \frac{c (k + \log(n/s))}{n s}$ and $c$ is a sufficiently large constant. 

\item [$(b)$] There exists an $n$-vertex graph $G_0$ satisfying $\delta(G_0) \geq s$ such that $\kappa(G_0 \cup H) < k$ holds, whenever $H$ is a graph having fewer  than $k \lfloor n/(s+1) \rfloor /2$ edges. Moreover, if $\omega(1) = s = o(n)$, then $\kappa(G_0 \cup \mathbb{G}(n,p)) = 0$ holds a.a.s. whenever $p := p(n) \leq \frac{(1 - o(1)) \log(n/s)}{n s}$.  
\end{itemize}
\end{theorem}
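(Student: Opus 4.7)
My plan is to treat the two parts separately. For part (a), I would use the standard characterisation that $\kappa(G \cup R) \geq k$ iff $G \cup R - S$ is connected for every $S \subseteq V$ with $|S| < k$, and perform a union bound over $S$. Fix such an $S$. Since $\delta(G) \geq s$ and $|S| < k \leq s/17$, every vertex of $G - S$ retains at least $16s/17$ neighbours, so every connected component of $G - S$ has size at least $16s/17$; hence $G - S$ has at most $m \leq 2n/s$ components. Disconnection of $G \cup R - S$ is equivalent to the existence of a non-trivial partition of these components into $T_1 \sqcup T_2$ with no $R$-edge between $A := \bigcup_{C \in T_1} C$ and $V \setminus (S \cup A)$. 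Writing $a := |A| \leq (n-|S|)/2$, the probability of no crossing $R$-edge is at most $\exp(-pa(n-|S|-a))$, and both $a$ and $n - |S| - a$ are at least $16s/17$.

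To finish the per-separator computation, I would sum over subsets $T_1 \subseteq [m]$ using $\binom{m}{|T_1|}$ together with the generating-function identity $\sum_{T \subseteq [m]} x^{|T|} = (1+x)^m$ with $x$ a uniform bound on the per-cut failure probability, obtaining $\Pr[G \cup R - S \text{ disconnected}] \leq O\bigl((n/s) \cdot e^{-\Omega(psn)}\bigr) \leq \exp\bigl(-\Omega(c(k + \log(n/s)))\bigr)$ provided $c$ is sufficiently large. A final union bound over the $\binom{n}{<k}$ choices of $S$ then finishes the proof; the ``$+k$'' term in the lower bound on $p$ is present precisely to absorb the $(en/k)^k$ enumeration cost.

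For part (b), I would take $G_0$ to be the vertex-disjoint union of $M := \lfloor n/(s+1) \rfloor$ cliques of order $s+1$, plus isolated vertices if $(s+1) \nmid n$; clearly $\delta(G_0) \geq s$. The deterministic assertion follows by observing that if $\kappa(G_0 \cup H) \geq k$, then each clique $C_i$ must satisfy $|N_H(C_i) \setminus C_i| \geq k$, for otherwise $N_H(C_i) \setminus C_i$ would be a vertex-cut of size less than $k$ separating $C_i$ from the remaining cliques (no $G_0$-edges leave $C_i$); summing over $i$ and using $\sum_i |N_H(C_i) \setminus C_i| \leq 2 e(H)$ yields $e(H) \geq kM/2$. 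For the probabilistic assertion, let $X$ count the cliques $C_i$ with no $R$-edge leaving $C_i$; any such clique is a connected component of $G_0 \cup R$, so $X \geq 1$ forces $\kappa(G_0 \cup R) = 0$. The isolation probability for a fixed clique is $(1-p)^{(s+1)(n-s-1)}$, and under $p \leq (1-o(1))\log(n/s)/(ns)$ one computes $\mathbb{E}[X] \to \infty$ as $n \to \infty$. A standard second-moment argument---using that $\mathbb{E}[X_i X_j]/(\mathbb{E}[X_i]\mathbb{E}[X_j]) = (1-p)^{-(s+1)^2} = 1+o(1)$ since $ps^2 = o(1)$ when $s = o(n)$---then yields $X \geq 1$ a.a.s.\ by Chebyshev's inequality.

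The main technical obstacle I foresee is the delicate balance in part (a): the per-separator disconnection probability must decay both in $k$ (to absorb the $\binom{n}{<k}$ factor from the union bound over separators) and in $\log(n/s)$ (to absorb the combinatorial enumeration of subsets $T_1$). Obtaining both decays simultaneously crucially requires summing $\binom{m}{|T_1|}$ through the generating-function identity above rather than using the na\"{\i}ve bound $2^m$ on the number of $T_1$'s---the latter would introduce an unabsorbable factor of $2^{\Omega(n/s)}$ precisely when $s = o(n)$.
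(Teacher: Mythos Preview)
Your argument for part~(a) has a genuine gap: the union bound over separators $S$ cannot be absorbed by the stated value of $p$. After you fix $S$, the smallest possible cut $(A_T,B_T)$ has $|A_T|\cdot|B_T|=\Theta(sn)$, so the per-separator disconnection probability you obtain is at best $\exp\{-\Theta(psn)\}=\exp\{-\Theta(c(k+\log(n/s)))\}$. But the number of separators of size $k-1$ is $\binom{n}{k-1}=\exp\{\Theta(k\log(n/k))\}$. For the union bound to succeed you would need $c(k+\log(n/s))\gtrsim k\log(n/k)$; since $k\le s/17=o(n)$ forces $\log(n/k)\to\infty$, no constant $c$ can achieve this. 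The ``$+k$'' term in $p$ gives you a factor $e^{-\Theta(ck)}$, which beats $e^{O(k)}$ but not $e^{\Theta(k\log(n/k))}$. Your care with the enumeration of cuts $T_1$ is well placed, but the enumeration of separators is the real obstacle.

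The paper avoids this by never union-bounding over $S$. Instead it uses Mader's theorem to extract a maximal family of pairwise disjoint $s/16$-connected pieces $V_1,\ldots,V_t$ with $t\le 16n/s$, and orders the remaining vertices by degeneracy. Each non-trivial partition $I_1\cup I_2$ of $[t]$ then extends, deterministically and independently of $S$, to a partition $A_r\cup B_r$ of $V$. The key probabilistic event is that for every such partition, $R$ contains a \emph{matching of size $k$} between $A_r$ and $B_r$; there are only $2^t\le 2^{16n/s}$ partitions to union-bound over, and the matching requirement is precisely what makes the argument $S$-free: any $S$ with $|S|<k$ leaves at least one matching edge intact, giving the contradiction.

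Part~(b) is essentially correct and close to the paper's argument, but your construction with isolated vertices when $(s+1)\nmid n$ violates $\delta(G_0)\ge s$; simply distribute the leftover vertices among the cliques so that each has size between $s+1$ and $(1+o(1))s$, as the paper does.
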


\bigskip
Lastly, we consider the asymptotic behaviour of the {\sl diameter} of randomly perturbed sparse graphs. For a connected graph $G$, let $\mathrm{diam}(G) = \max \left \{\textrm{dist}_G(u,v) : u,v \in \binom{V(G)}{2} \right\}$ denote the {\em diameter} of $G$, where $\textrm{dist}_G(u,v)$ is the length of a shortest $uv$-path in $G$; if the latter is disconnected, then $\mathrm{diam}(G) = \infty$. 
The threshold for the property $\mathrm{diam}(\mathbb{G}(n,p)) = d$ is known for every fixed integer $d \geq 2$; it is roughly $\tilde \Omega(n^{-\frac{d-1}{d}})$ (see, e.g.~\cite[Theorem~7.1]{FK}). If $p := p(n) = \frac{\omega(\log n)}{n}$, then $\mathrm{diam}(\mathbb{G}(n,p)) = (1+o(1)) \frac{\log n}{\log (np)} $ a.a.s.~\cite[Theorem~7.2]{FK}.

While there are $n$-vertex graphs $G$ satisfying $\delta(G) = \Omega(n)$ whose diameter is infinite, Bohman, Frieze, Krivelevich, and Martin~\cite{BFKM04} proved that such graphs are {\sl not far} from having constant diameter. More precisely, they proved that adding  $\omega(1)$ random edges to such a graph $G$ a.a.s. results in a graph whose diameter is at most 5, that adding $\Omega(\log n)$ edges to such a graph a.a.s. results in a graph whose diameter is at most 3, and that adding $\Omega(n \log n)$ random edges a.a.s. has the resulting graph having diameter at most 2.
% (consult~\cite{BFKM04} regarding the implicit constants in the Big $\Omega$ notation). 
 Their results are tight in terms of the number of random edges needed. Our sixth result extends these results by allowing the graph being perturbed to be sparse. 

\begin{theorem} \label{th::Diameter} 
Let $n$ and $k := k(n) = o(n)$ be integers, let $p := p(n) = \omega \left(\frac{\log (n/k)}{n k} \right)$, and let $q = 1 - (1-p)^{k^2}$.
\begin{itemize}
\item [$(a)$] Let $G$ be an $n$-vertex graph with minimum degree $k$. Then, a.a.s. $\emph{diam}(G \cup \mathbb{G}(n,p)) \leq \frac{(5 + o(1)) \log (n/k)}{\log (n q/(2k))}$.

\item [$(b)$] There exists an $n$-vertex graph $G_0$ satisfying $\delta(G_0) \geq k$ such that a.a.s. $\emph{diam}(G_0 \cup \mathbb{G}(n,p)) > \frac{(1 - o(1)) \log (n/k)}{\log (n q/k)}$. 
\end{itemize}
\end{theorem}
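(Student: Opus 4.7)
My plan for part (a) is a two-sided BFS argument. Fix arbitrary vertices $u, v \in V(G)$, set $D$ to be the claimed upper bound $5 \log(n/k)/\log(nq/(2k))$, and run BFS from $u$ and $v$ simultaneously in $G \cup R$. The plan is to show that with probability $1 - o(n^{-2})$, after $\lceil D/2 \rceil$ rounds each ball exceeds size $n/2$, whence the two balls must intersect and $d_{G \cup R}(u,v) \leq D$. A union bound over $\binom{n}{2}$ pairs then yields the a.a.s. bound on $\mathrm{diam}(G \cup R)$.

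The crux is a BFS growth lemma. Writing $B_i := B_i(v)$, the minimum degree hypothesis gives $|B_1| \geq k+1$. The inductive step asserts that, provided $k \leq |B_i| \leq n/2$, there is a constant $c$ with $|B_{i+c}| \geq |B_i| \cdot \Omega(nq/(2k))$ with probability $1 - o(n^{-3})$. The proof alternates $R$-steps with $G$-steps. For an $R$-step, each outside vertex $w$ joins $B_{i+1}$ independently of all others with probability at least $1 - (1-p)^{|B_i|} \geq 1 - (1-p)^k = 1 - (1-q)^{1/k} \geq q/k$ (using Bernoulli's inequality), yielding expected $\geq nq/(2k)$ new vertices; a Chernoff bound on the $\Theta(n)$ independent Bernoulli trials supplies the high-probability concentration. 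For the subsequent $G$-step, each freshly added vertex contributes $\geq k$ $G$-neighbors; since the fresh vertices are a nearly-uniform sample of $V \setminus B_i$ and $B_i$ occupies a small fraction of $V$, the typical fresh vertex contributes $\Omega(k)$ previously-unseen neighbors, delivering the multiplicative factor $\Omega(nq/(2k))$ per constant number of rounds. Iterating this combined growth $O(\log(n/k)/\log(nq/(2k)))$ times drives $|B_i|$ past $n/2$.

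For part (b), take $G_0$ to be the disjoint union of $t := \lfloor n/(k+1) \rfloor$ cliques $K_{k+1}$, with one smaller component to accommodate divisibility; then $\delta(G_0) = k$. Labelling its cliques $C_1, \ldots, C_t$, define the meta-graph $H$ on $[t]$ by declaring $\{i, j\} \in E(H)$ iff $R$ contains at least one edge between $C_i$ and $C_j$; then $H$ is distributed as $\mathbb{G}(t, \tilde q)$ with $\tilde q := 1 - (1-p)^{(k+1)^2} = (1+o(1)) q$ under our hypotheses. Any walk in $G_0 \cup R$ from a vertex of $C_i$ to a vertex of $C_j$ at each step either stays within a clique or switches cliques via a random edge (a meta-edge in $H$); consequently $\mathrm{diam}(G_0 \cup R) \geq \mathrm{diam}(H)$. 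A routine check yields $t \tilde q = \omega(\log t)$ from the hypothesis on $p$; the classical diameter theorem for binomial random graphs~\cite[Theorem 7.2]{FK} then gives $\mathrm{diam}(H) = (1+o(1)) \log t /\log(t \tilde q) = (1 - o(1)) \log(n/k)/\log(nq/k)$ a.a.s., as required.

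The main obstacle, I expect, is the growth lemma in part (a), specifically amplifying the random-edge growth by the factor $k$ afforded by $G$-neighborhoods. A plain random-edge BFS yields multiplicative growth of only $\Theta(np)$ per round, which falls short of the target $\Theta(nq/(2k)) \approx \Theta(npk/2)$ (in the regime $pk^2 = o(1)$) by a factor of $k/2$. Recovering this factor rigorously requires careful control of the overlap between the $G$-neighborhoods of freshly-added vertices and the current ball $B_i$; the constant $5 + o(1)$ in the theorem presumably absorbs the inefficiencies incurred in doing so.
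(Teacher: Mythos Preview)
Your part~(b) is correct and essentially identical to the paper's argument: disjoint $(k{+}1)$-cliques, a meta-graph on cliques with edge probability $(1+o(1))q$, and an appeal to the standard diameter result for $\mathbb{G}(m,q)$. (One cosmetic point: a leftover ``smaller component'' would have minimum degree below $k$; the paper instead lets a few cliques be slightly larger so that every vertex has degree at least $k$.)

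Your part~(a), however, takes a genuinely different route from the paper, and the step you yourself flag as the main obstacle is a real gap, not just a matter of bookkeeping. The assertion that ``the typical fresh vertex contributes $\Omega(k)$ previously-unseen neighbours'' does not follow from the fresh vertices being a near-uniform sample of $V\setminus B_i$: the $G$-neighbourhoods are fixed by the adversary, not random. For instance, one can arrange that every vertex of $V\setminus B_i$ has all of its $G$-neighbours inside a fixed set $T$ of size $k$; then a single $G$-step after the $R$-step adds only $|T|=k$ new vertices regardless of how many fresh vertices there are, so the claimed factor-$k$ amplification fails outright for that step. In such examples the \emph{next} $G$-step recovers (the vertices of $T$ have huge degree), but turning this into a uniform statement of the form ``$|B_{i+c}|\geq |B_i|\cdot\Omega(nq/k)$ for some absolute constant $c$'' requires a structural argument you have not supplied, and it is not clear that any fixed $c$ works for every $G$ with $\delta(G)\ge k$.

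The paper sidesteps this difficulty entirely. Rather than alternating $R$-steps and $G$-steps in a BFS, it first proves a deterministic decomposition (Proposition~\ref{prop::DiametertSplit}): $V(G)$ can be partitioned into parts each of size at least $k{+}1$ and radius at most~$2$ in $G$. Refining each part into pieces of size between $k$ and $2k$ yields a meta-graph on $m\in[n/(2k),n/k]$ super-vertices in which any two super-vertices are joined by an $R$-edge independently with probability at least $q$. The classical result $\mathrm{diam}(\mathbb{G}(m,q))\le(1+o(1))\log m/\log(mq)$ then applies directly to the meta-graph, and each meta-edge is realised in $G\cup R$ by one random edge plus at most four $G$-edges inside the two parts. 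This is exactly where the constant $5$ comes from, and it is the radius-$2$ partition --- not a BFS overlap estimate --- that makes the $k$-amplification rigorous.
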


The rest of this paper is organised as follows. In Section~\ref{sec:minors} we prove Theorem~\ref{thm:main-minors}. In Section~\ref{sec:minors-sparse-perturb} we prove Theorem~\ref{thm:main-minors-sparse}. In Section~\ref{sec:vx-trans} we prove Proposition~\ref{th::HadwigerVT}. In Section~\ref{sec::topMinor} we prove Theorem~\ref{thm:tcl}. In Section~\ref{sec::VerCon} we prove Theorem~\ref{th::VertexCon}. Finally, in Section~\ref{sec::diameter} we prove Theorem~\ref{th::Diameter}. 
Throughout this paper, unless explicitly stated otherwise, $\log$ stands for the natural logarithm. We omit floor and ceiling
signs whenever they are not crucial.

%\bigskip\noindent
%{\bf Organisation.} In Section~\ref{sec:minors}, we prove Theorem~\ref{thm:main-minors} and then in Section~\ref{sec:minors-sparse-perturb}, Theorem~\ref{thm:main-minors-sparse} is deduced from Theorem~\ref{thm:main-minors}. Proposition~\ref{th::HadwigerVT} is proved in Section~\ref{sec:vx-trans}. Theorems~\ref{} and~\ref{} are proved in Sections~\ref{} and~\ref{}, respectively. 

\section{Complete minors in randomly perturbed graphs}\label{sec:minors}

In this section, we prove Theorem~\ref{thm:main-minors}. We commence with the following observation alluded to in the introduction. Given a graph $G$ and a positive integer $k$, let $B := \{v \in V(G) : \deg_G(v) < k\}$. Then,
$$
\alpha(G) \geq \alpha(G[B]) \geq |B|/(\Delta(G[B])+1) \geq |B|/k,
$$
implying that $|B| \leq k\alpha(G)$. We have thus shown the following.

\begin{observation}\label{obs:dev-avg-degree}
For every $\eta > 0$ and every $n$-vertex graph $G$, all but at most $\eta n$ vertices $v \in V(G)$ satisfy $\deg_G(v) \geq \lfloor \eta \frac{n}{\alpha(G)} \rfloor$. 
\end{observation}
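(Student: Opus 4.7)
The plan is to apply the standard greedy lower bound on the independence number in terms of the maximum degree. Set $k := \lfloor \eta n / \alpha(G) \rfloor$; the goal is to show that the ``low-degree'' set
$$
B := \{v \in V(G) : \deg_G(v) < k\}
$$
satisfies $|B| \leq \eta n$.

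The key observation is that every vertex of $B$ has fewer than $k$ neighbours in $G$, hence $\Delta(G[B]) \leq k - 1$. A standard greedy argument (iteratively picking a vertex and removing its closed neighbourhood from $G[B]$) then yields $\alpha(G[B]) \geq |B|/k$. Combining this with the trivial monotonicity $\alpha(G) \geq \alpha(G[B])$ gives $|B|/k \leq \alpha(G)$, i.e., $|B| \leq k \alpha(G) \leq \eta n$, which is exactly what the observation claims about the complement of $B$.

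The only edge case to mention is that if $\eta n < \alpha(G)$ then $k = 0$, in which case the conclusion $\deg_G(v) \geq 0$ is vacuous and no argument is needed. There is no genuine obstacle here: the statement is essentially a one-line corollary of the greedy independence-number bound applied to the subgraph induced by the low-degree vertices, and the author indeed presents the short calculation immediately before stating the observation.
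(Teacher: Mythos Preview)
Your proposal is correct and follows exactly the same approach as the paper: define the low-degree set $B$, bound $\alpha(G[B]) \geq |B|/(\Delta(G[B])+1) \geq |B|/k$ via the greedy argument, and combine with $\alpha(G) \geq \alpha(G[B])$ to obtain $|B| \leq k\alpha(G) \leq \eta n$. The paper's version is the same one-line chain of inequalities displayed immediately before the observation, so there is nothing to add.
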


Given an $n$-vertex graph $G$, a set $X \subseteq V(G)$ of size $\ell$ formed by choosing each of its members independently and uniformly at random from $V(G)$ without replacement, is said to be an $\ell$-{\em uniform subset of $G$} ($\ell$-set, hereafter). A $(k,\ell)$-{\em ensemble} of $G$ is a sequence $(X_1,\ldots,X_k)$ of $\ell$-sets, all sampled in succession such that the ambient set from which $X_i$ is sampled is $V(G) \sm \bigcup_{j=1}^{i-1} X_j$. The {\em size} of such an ensemble is $\left| \bigcup_{i=1}^k X_i \right| = k \ell$. For distinct indices $i,j \in [k]$, set $\mathcal{X}^{(i,j)} = \emptyset$ if $j < i$, and $\mathcal{X}^{(i,j)} = \bigcup_{r=i}^j X_r$ otherwise; put $\mathcal{X} := \mathcal{X}^{(1,k)}$. 

%\red{Throughout the remainder of this section, given $x \in \mathbb{R}$, we often suppress  occurrences of $\lfloor x \rfloor$ whenever these arise and write $x$ instead; occurrences of $\lceil x \rceil$ are never suppressed when their presence is crucial or informative.}  

\medskip

The following lemma forms our main step towards identifying the potential branch sets of the sought after minor. 

%Throughout this section, $k(\eps,n) := \eps n^{5/6}$ and $\ell(\eps,n):= \eps n^{1/6}$, whenever $\eps >0$ is real and $n$ is a positive integer.

\begin{lemma}\label{lem:ensemble}
Given $\gamma \in (0,1/12)$, set $\alpha$ such that $\gamma^2 \alpha^{-1} \geq 2$. Let $G$ be an $n$-vertex graph satisfying $\alpha(G) \leq \alpha n$ and define the quantities
\begin{equation}\label{eq:kl}
\ell := \max\left\{\frac{2}{\gamma} \sqrt{\alpha(G)},\;\frac{32}{\gamma} \log n\right\} \quad \text{and} \quad k := \gamma \frac{n}{\ell}.
\end{equation}
Let $(X_1,\ldots,X_k)$ be a $(k,\ell)$-ensemble of $G$ and let $(U_1,\ldots,U_k)$ be a $(k,\ell)$-ensemble of $G \setminus \mathcal{X}$. Then, the following properties hold a.a.s. simultaneously.
\begin{description}	
	\item [(E.1)] For every $i \in [k]$, there exists a set $N_i \subseteq N_G(X_i)$ satisfying $|N_i| \geq k/3$ and $N_i \cap \mathcal{X} = \emptyset$;
		
	\item [(E.2)] $|\{j \in [k] : N_i \cap U_j \neq \emptyset\}| = \Omega_\gamma(k)$ for every $i \in [k]$. 
	
\end{description}
\end{lemma}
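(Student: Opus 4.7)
To prove Lemma~\ref{lem:ensemble}, the plan is to verify (E.1) and (E.2) in turn and union bound over $i \in [k]$. Setting $\eta := \gamma/2$ and invoking Observation~\ref{obs:dev-avg-degree}, the set $H := \{v \in V(G) : \deg_G(v) \geq \eta n/\alpha(G)\}$ satisfies $|H| \geq (1 - \gamma/2)n$. The hypothesis $\ell \geq 2\sqrt{\alpha(G)}/\gamma$ rearranges to $\alpha(G) \leq \gamma^2 \ell^2/4$, so every $v \in H$ has degree at least $2n/(\gamma \ell^2)$. It is convenient to sample $(X_1, \ldots, X_k)$ in two stages: first draw $\mathcal{X}$ uniformly from $\binom{V(G)}{\gamma n}$, then partition $\mathcal{X}$ uniformly into blocks of size $\ell$ labelled $X_1, \ldots, X_k$; sample $(U_1, \ldots, U_k)$ analogously from $V(G) \setminus \mathcal{X}$.

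For (E.1), I would first show that a uniform $\gamma n$-subset $\mathcal{X}$ is a.a.s. \emph{typical}, by which I mean that every $v \in H$ satisfies $|N_G(v) \cap \mathcal{X}| \geq n/\ell^2$. For fixed $v \in H$ this intersection is hypergeometric with mean $\gamma \deg_G(v) \geq 2n/\ell^2$, and the lower bound $\ell \geq 32 \log n/\gamma$ is tailored so that Chernoff's inequality, followed by a union bound over $v \in H$, delivers typicality with probability $1 - o(1)$. Conditional on a typical $\mathcal{X}$, for each $i \in [k]$ the set $X_i$ is a uniform $\ell$-subset of $\mathcal{X}$, and for every $v \in H \setminus \mathcal{X}$,
\begin{equation*}
\Pr\bigl[v \in N_G(X_i) \,\big|\, \mathcal{X}\bigr]
\;\geq\; 1 - \left(1 - \frac{1}{\gamma \ell^2}\right)^{\!\ell}
\;\geq\; 1 - e^{-1/(\gamma \ell)}
\;\geq\; \frac{1}{2\gamma \ell},
\end{equation*}
using $1 - e^{-x} \geq x/2$ on $(0,1]$. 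Summing over $v \in H \setminus \mathcal{X}$,
\begin{equation*}
\Ex\bigl[|N_G(X_i) \cap (H \setminus \mathcal{X})| \,\big|\, \mathcal{X}\bigr]
\;\geq\; \frac{(1-3\gamma/2)\, n}{2 \gamma \ell} \;=\; \frac{(1-3\gamma/2)\, k}{2\gamma^2},
\end{equation*}
which comfortably exceeds $k/3$ since $\gamma < 1/12$. The bound $|N_G(X_i) \setminus \mathcal{X}| \geq k/3$ would then follow from a concentration argument pushing the failure probability below $o(1/k)$ per $i$, after which a union bound over $i$ closes (E.1).

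For (E.2), I condition on the outcome of the $X_i$'s and on the sets $N_i \subseteq V(G) \setminus \mathcal{X}$ of size at least $k/3$ produced by (E.1). In the two-stage generation of the $U$-ensemble, $\mathcal{U} := \bigcup_j U_j$ is a uniform $\gamma n$-subset of the $(1-\gamma)n$-element set $V(G) \setminus \mathcal{X}$. For each fixed $i$, $|N_i \cap \mathcal{U}|$ is hypergeometric with mean $|N_i| \cdot \gamma/(1-\gamma) \geq \gamma k/3$, and Chernoff gives $|N_i \cap \mathcal{U}| \geq \gamma k/6$ with failure probability $e^{-\Omega(\gamma k)}$. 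Given $\mathcal{U}$ and $m := |N_i \cap \mathcal{U}| \geq \gamma k/6$, for each $j \in [k]$,
\begin{equation*}
\Pr\bigl[U_j \cap N_i = \emptyset \,\big|\, \mathcal{U}, m\bigr]
= \frac{\binom{k\ell - m}{\ell}}{\binom{k\ell}{\ell}}
\leq \left(1 - \frac{m}{k\ell}\right)^{\!\ell}
\leq e^{-\gamma/6},
\end{equation*}
so the expected number of $j$ with $U_j \cap N_i \neq \emptyset$ is at least $k(1 - e^{-\gamma/6}) = \Omega_\gamma(k)$. Since the indicators $\mathbf{1}[U_j \cap N_i \neq \emptyset]$ arise from a uniform partition and are thus negatively associated, Chernoff-type bounds yield the asserted $\Omega_\gamma(k)$ lower bound with failure probability $e^{-\Omega_\gamma(k)}$, which survives a union bound over $i \in [k]$.

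The main obstacle I anticipate is the concentration step in the proof of (E.1). A direct Doob martingale that exposes the vertices of $X_i$ one at a time is too crude, since swapping a single vertex can change $|N_G(X_i) \cap (H \setminus \mathcal{X})|$ by as much as $\Delta(G)$, which could be as large as $n$. A more delicate argument should nevertheless work: one option is to apply Janson's inequality to the family of decreasing events $\{X_i \cap (N_G(v) \cap \mathcal{X}) = \emptyset\}_{v \in H \setminus \mathcal{X}}$, estimating the pair-interaction term from the typicality secured in the first step; another is to first truncate the high-degree tail of $G$ (the set $\{v : \deg_G(v) > T\}$ has size at most $2e(G)/T$) before applying a bounded-differences argument to the low-degree remainder.
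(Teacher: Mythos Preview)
Your argument for (E.2) is sound and close in spirit to the paper's (which exposes $U_1,\ldots,U_k$ sequentially rather than via your two-stage sampling, but the negative-association reasoning you give is perfectly valid).

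For (E.1), however, the gap you flag is genuine and neither of your two proposed fixes closes it. First, the typicality step already breaks down at the top of the allowed range: when $\alpha(G)=\Theta(n)$ the guaranteed degree $\eta n/\alpha(G)$ of a vertex in $H$ is only a constant, so $|N_G(v)\cap\mathcal{X}|$ has constant mean and Chernoff gives failure probability bounded away from zero --- a union bound over $v$ is hopeless. (The bound $\ell\geq 32\gamma^{-1}\log n$ is \emph{not} tailored to this step; see below for what it is actually used for.) Second, even granting typicality, your Janson suggestion would need a lower-tail inequality of the form $\Pr[Y\leq (1-\varepsilon)\mu]\leq\exp(-c\mu^2/(\mu+\Delta))$, and here $\Delta$ can be enormous: a single vertex $w\in\mathcal{X}$ of degree $\Theta(n)$ lies in $D_v=N_G(v)\cap\mathcal{X}$ for $\Theta(n)$ many $v$, contributing $\Theta(n^2)\cdot\Pr[w\in X_i]=\Theta(n^2/k)$ to $\Delta$, which swamps $\mu^2=\Theta(k^2)$ in wide regimes. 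Your truncation idea fares no better, since $|\{v:\deg_G(v)>T\}|\leq 2e(G)/T$ is vacuous when $e(G)=\Theta(n^2)$, which the hypotheses do not preclude.

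The paper sidesteps concentration altogether, and --- ironically --- does so via exactly the sequential exposure you dismissed, with one twist. It reveals the vertices of $X_i$ one at a time while maintaining a growing set $A_i\subseteq N_G(X_i)$, stopping the growth once $|A_i|\geq k/2$. Before sampling the $(j{+}1)$st vertex it looks at the \emph{fresh} set $R_{j+1}=V(G)\setminus(\mathcal{X}^{(1,i-1)}\cup X_i^{(j)}\cup A_i^{(j)})$ and applies Observation~\ref{obs:dev-avg-degree} to $G[R_{j+1}]$ (not to $G$): all but a $\gamma$-fraction of $R_{j+1}$ has degree at least $\gamma|R_{j+1}|/\alpha(G[R_{j+1}])\geq\gamma^2 n/\alpha(G)$ inside $G[R_{j+1}]$. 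Hence with probability at least $\gamma/2$ the next sampled vertex is ``good'' and contributes at least $\lfloor\gamma^2 n/\alpha(G)\rfloor$ \emph{genuinely new} neighbours to $A_i$. One then only needs $\Theta(\gamma\ell)$ successes, and the number of successes stochastically dominates $\mathrm{Bin}(\ell,\gamma/2)$; this is where $\ell\geq 32\gamma^{-1}\log n$ is used, to push the Chernoff failure probability below $1/k$. The point is that by capping $A_i$ at $k/2$ and counting successes rather than tracking $|N_G(X_i)|$, the effective Lipschitz constant drops from $\Delta(G)$ to $1$. A short separate hypergeometric estimate then shows $|A_i\cap\mathcal{X}|\leq 2\gamma|A_i|$, yielding $N_i=A_i\setminus\mathcal{X}$ of size at least $k/3$.
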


%\begin{remark}
%{\em Properties~(E.1-2) imply that sets $Y_i$ and $W_i$ are nonempty for every $i \in [k]$.}
%%It is instructive to note that one can also insist on $|Y_i|,|W_i| \geq \gamma \ell /2$ holding a.a.s. for every $i \in [k]$.}  
%\end{remark}

%\begin{proofof}{Lemma~\ref{lem:ensemble}} 
\begin{proof}
The proof proceeds via two rounds of exposure. First, the ensemble $(X_1,\ldots,X_k)$ is shown to satisfy Property~(E.1) asymptotically almost surely. Then, an ensemble $(X_1, \ldots, X_k)$ satisfying Property~(E.1) is fixed and subsequently the ensemble $(U_1,\ldots, U_k)$ is sampled and proven to a.a.s.~satisfy Property~(E.2). 

\medskip

Starting with Property~(E.1), choose the $\ell$-sets $X_1, \ldots, X_k$ in succession. 
Fix $i \in [k]$ and consider the ongoing formation of $X_i$. 
For $j \in [\ell]$, let $X^{(j)}_i$ denote the subset of $X_i$ formed after precisely $j$ vertices have been placed in $X_i$. For every $i \in [k]$ let $A_i^{(0)} \subseteq  A_i^{(1)}\subseteq  \ldots \subseteq A_i^{(\ell)}$ be a sequence of sets constructed as follows. First, set $A_i^{(0)} = \emptyset$. Assume we have already defined $A_i^{(0)} \subseteq \ldots \subseteq A_i^{(j)}$ for some $0 \leq j < \ell$ and now wish to define $A_i^{(j+1)}$. If $\left| A_i^{(j)} \right| \geq k/2$, set $A_i^{(j+1)} = A_i^{(j)}$; otherwise, define $A_i^{(j+1)}$ as follows. Set 
\begin{equation}\label{eq:Rj}
R_{j+1} = V(G) \sm \left( \mathcal{X}^{(1,i-1)} \cup X^{(j)}_i \cup A_i^{(j)}\right),
\end{equation}
and define $R'_{j+1} = R_{j+1} \sm B_{j+1}$, where 
$$
B_{j+1} := \left\{v \in R_{j+1}: \deg_{G[R_{j+1}]}(v) < \gamma \frac{|R_{j+1}|}{\alpha(G[R_{j+1}])} \right\}.
$$
Let $v_{j+1} \in X_i^{(j+1)} \setminus X_i^{(j)}$ denote the $(j+1)$st vertex sampled and added to $X_i$. If $v_{j+1}$ is chosen from $R'_{j+1}$, then set $A_i^{(j+1)} = A_i^{(j)} \cup N_{G[R_{j+1}]}(v_{j+1})$; otherwise, set $A_i^{(j+1)} = A_i^{(j)}$. 
For every $i \in [k]$ let $A_i = A_i^{(\ell)}$ and let $N_i = A_i \sm \mathcal{X}$; note that $N_i \subseteq N_G(X_i)$ and $N_i \cap \mathcal{X} = \emptyset$. It thus remains to prove that a.a.s. $|N_i| \geq k/3$ for every $i \in [k]$. For every $i \in [k]$, define the events 
$$
\mathcal{E}_i^{(1)} : |A_i| \geq k/2, \quad \text{and} \quad \mathcal{E}_i^{(2)} : |A_i \cap \mathcal{X}| \leq 2\gamma |A_i|.
$$
Using the assumption $\gamma < 1/12$, it follows that %the probability that Property~(E.1) fails is bounded from above by 
\begin{equation}\label{eq:E2}
\Pr[\text{Property~(E.1) fails}] \leq \Pr\left[\;\mathcal{E}_i^{(1)} \; \text{fails for some $i \in [k]$}\;\right]  + \sum_{i=1}^k \Pr\left[\mathcal{E}_i^{(2)} \; \text{fails} \; \Big| \; \mathcal{E}_i^{(1)} \; \text{holds}\right].
\end{equation}
%for every $i \in [k]$
%To establish Property~(E.2), it suffices to prove that 
%$$
%\Pr\left[\mathcal{E}_i^{(1)}\; \text{occurs for all}\; i\in [k] \right] \geq 1- o(1) \quad \text{and} \quad \Pr\left[\mathcal{E}_i^{(2)}\; \text{occurs for all}\; i\in [k] \right] \geq 1- o(1).
%$$
We prove that the two terms appearing on the right hand side of~\eqref{eq:E2} are both $o(1)$.

Commencing with the first term, fix some $i \in [k]$ and some integer $0 \leq j < \ell$ for which $\left|A_i^{(j)} \right| < k/2$. Since $\left|\mathcal{X}^{(1,i-1)} \cup X^{(j)}_i\right| \leq k \ell \overset{\eqref{eq:kl}}{=} \gamma n$, $\left|A_i^{(j)} \right| < k/2 < \gamma n$, and $\gamma < 1/3$, it follows that $|R_{j+1}| \geq (1 - 2 \gamma) n \geq \gamma n$.
Observation~\ref{obs:dev-avg-degree}, applied to $G[R_{j+1}]$, implies that $|B_{j+1}| \leq \gamma \cdot |R_{j+1}|$. Therefore 
\begin{equation}\label{eq:R'}
|R'_{j+1}| \geq |R_{j+1}| (1 - \gamma) \geq \gamma n/2,
\end{equation}
where the last inequality holds for $\gamma \leq 1/2$. If $v_{j+1}$ is chosen from $R'_{j+1}$, then its addition results in $A_i^{(j+1)} = A_i^{(j)} \cup N_{G[R_{j+1}]}(v_{j+1})$ being set; the latter leads to an increase of at least 
$$
\left\lfloor \gamma \frac{|R_{j+1}|}{\alpha(G[R_{j+1}])} \right\rfloor \geq \left\lfloor\gamma^2 \frac{n}{\alpha(G)}\right\rfloor \geq \lfloor \gamma^2 \alpha^{-1} \rfloor \geq 1
$$ 
in the size of the eventual set $A_i$, where for the last inequality we rely on our choice of $\alpha$. This choice also supports the inequality
$$
\left\lfloor \gamma^2 \frac{n}{\alpha(G)} \right\rfloor  \cdot \left\lceil \frac{\gamma}{4}\ell \right\rceil \overset{\eqref{eq:kl}}{\geq} \gamma \frac{n}{2\ell} = k/2.
$$ 
Hence, if $v_j$ is chosen from $R'_j$ for at least $\gamma \ell/4$ indices $j \in [\ell]$, then $|A_i| \geq k/2$.
%$v_{j+1}$ will eventually reside in $Y_i$ and

For $i \in [k]$ and $r \in [\ell]$, the addition of the $r$th vertex $v_r$ to $X_i$ is termed {\sl successful} if $\left|A^{(r-1)} \right| \geq k/2$ or $v_r \in R'_r$. If the former holds, then the addition is successful with probability one; otherwise, by~\eqref{eq:R'}, the addition is successful with probability at least $\gamma/2$. These probability bounds hold regardless of the outcome of previous additions to $X_i$. Let $Z_i$ denote the number of successful vertex-additions to $X_i$ and let $Z\sim \mathrm{Bin}(\ell, \gamma/2)$. Then, $\Pr[Z_i < h] \leq \Pr[Z < h]$ holds for every $h$.
%indices $r \in [\ell]$ for which $v_r$, the $r$th vertex added to $X_i$, satisfies $v_r \in R'_r$. The set $R'_r$ is  sufficiently large provided that $\left| A_i^{(r-1)} \right| < k$; if the latter does not hold, then the event $\mathcal{E}_i^{(1)}$ occurs for the given index $i$ with probability one. Consequently, the experiment underlying the random variable $Z_i$ commences at the state where no vertices are chosen into $X_i$ (and thus none are found in $Y_i$ nor in $A_i$) and stops at the index $\min\{j^*,\ell\}$, where $j^*$ is the smallest index for which $\left| A_i^{(j^*)} \right| \geq k$ holds. Owing to~\eqref{eq:R'}, if $\ell \leq j^*$, then 
An application of Chernoff's inequality (see, e.g.~\cite[Theorem~2.1]{JLR}) and the union-bound then yield
\begin{align} \label{eq:E_1}
\Pr\left[\mathcal{E}_i^{(1)}\; \text{fails for some $i \in [k]$}\right] &\leq \Pr\left [Z_i < \frac{\gamma}{4}\ell\; \text{for some $i \in [k]$}\right] \leq k \cdot \exp\left\{- \frac{\gamma}{16}\ell \right\} \nonumber \\ 
&\overset{\eqref{eq:kl}}{\leq} \exp \left\{ \log k - \frac{\gamma}{16} \cdot \frac{32}{\gamma} \log n \right\} = o(1),
\end{align}
where the last equality holds since $k \leq n$. 

%The inequality
%$$
%|A_i| \geq \gamma^2 \frac{n}{\alpha(G)} \cdot \frac{\gamma}{4}\ell \geq \gamma \frac{n}{\ell} = k
%$$ 
%holds for every $i \in [k]$ for which at least $\gamma \ell/4$ of the vertex-additions forming $X_i$ are successful. Indeed, each such addition contributes at least $\gamma^2 \frac{n}{\alpha(G)}$ to the size of $A_i$; coupling this with the assumption that $\ell \geq \frac{2}{\gamma} \sqrt{\alpha(G)}$, supported by~\eqref{eq:kl}, yields the inequality. 
%In particular, 
%\begin{equation}\label{eq:E_1}
%\Pr\left[\mathcal{E}_i^{(1)}\; \text{fails for some $i \in [k]$}\right] \leq \Pr\left [Z_i < \frac{\gamma}{4}\ell\; \text{for some $i \in [k]$}\right] = o(1).
%\end{equation}

\medskip
To handle the second term appearing on the right hand side of~\eqref{eq:E2}, it suffices to prove that  
$$
\Pr\left[|A_i \cap \mathcal{X}| > 2 \gamma |A_i| \; \Big| \; |A_i| \geq k/2 \right] = o(1/k) 
$$
holds for every $i \in [k]$. Fix any $i \in [k]$. Note that $|\mathcal{X}| = k \ell = \gamma n$ and that whenever a vertex is sampled and added to $\mathcal{X}$, the probability that it is in $A_i$ as well is at most $\frac{|A_i|}{(1 - \gamma)n}$ (in some cases this probability is actually zero) and this holds regardless of the outcome of previous additions to $\mathcal{X}$. Indeed, $\mathcal{X}^{(1,i)} \cap A_i = \emptyset$ by definition (see~\eqref{eq:Rj}) and when elements of $\mathcal{X}^{(i+1,k)}$ are sampled, $A_i$ is already determined. This stronger bound is not needed for our calculations.  It follows that $\Pr \left[ \left|A_i \cap \mathcal{X} \right| > h \, \Big| \, |A_i| \geq k/2 \right] \leq \Pr[L > h]$ for every $h$, where $L \sim \mathrm{Bin}\left(\gamma n, \frac{|A_i|}{(1-\gamma)n} \right)$. Since $\gamma \leq 1/3$, it follows that $\Ex[L] = \frac{\gamma}{1-\gamma}|A_i| \leq 1.5 \gamma |A_i|$. An application of Chernoff's inequality then yields
\begin{equation} \label{eq:E_2}
\Pr\left[ \left|A_i \cap \mathcal{X} \right| > 2\gamma |A_i| \; \Big| \; |A_i| \geq k/2 \right] \leq \exp \{- \Omega_\gamma(|A_i|) \}  \overset{|A_i| \geq k/2}{=} \exp \{-\Omega_\gamma(k) \} = o(1/k),
\end{equation}
where the last equality holds since $k = \Omega(\sqrt{n})$ by~\eqref{eq:kl}.

%; for the last equality, note that $A_i$ is fixed and that the sampling of $\mathcal{X}^{(i+1,k)}$ accounts for the  underlying source of randomness. 

%Fix $i \in [k]$ and an arbitrary vertex $u \in V(G)$; note that if $u \in A_i$, then there exists some $0 \leq j \leq \ell-1$ such that $u \in N_{G[R_{j+1}]}(v_{j+1})$. In particular, $u \in R_{j+1}$ and thus  $u \notin \mathcal{X}^{(1,i-1)} \cup X_i^{(j)} \cup A_i^{(j)}$, by~\eqref{eq:Rj}. It follows that $A_i \cap \mathcal{X}^{(1,i)} = \emptyset$. The set $\mathcal{X}^{(i+1,k)}$ forms a $(k-i)\ell$-set and has size at most $k\ell = \gamma n$. 

%This last estimation, coupled with~\eqref{eq:E_1}, imply that the right hand side of~\eqref{eq:E2} is $o(1)$. 
Combining~\eqref{eq:E2}, \eqref{eq:E_1}, and~\eqref{eq:E_2}, implies that Property~(E.1) holds a.a.s. as required. 

\bigskip

We proceed to establishing Property~(E.2). Fix a $(k,\ell)$-ensemble $(X_1,\ldots, X_k)$ satisfying Property~(E.1). For $i \in [k]$, let $N_i \subseteq N_G(X_i) \setminus \mathcal{X}$ be a set of size $|N_i| = k/3$. For $i \in [k]$ and an ensemble $(U_1, \ldots, U_k)$ let
%as follows. First, sample a $k \ell$-set $S \subseteq V(G) \setminus \mathcal{X}$ (viewed as a sequence). Then, permute $S$ using a permutation $\pi$, sampled uniformly at random from all permutations over $S$. Finally, for every $i \in [k]$, the set $U_i$ consists of all elements of the sequence $\pi(S)$ whose index $j$ satisfies $(i-1) \ell + 1 \leq j \leq i \ell$. 
%\footnote{Repeating elements in $S$ are considered distinct for this action.}
$u_i = |\{j \in [k]: U_j \cap N_i \neq \emptyset\}|$. It suffices to prove that there exists a constant $c > 0$ such that 
\begin{equation*} %\label{eq:E3}
\Pr\left[u_i < c k \, \Big| \, |N_i| = k/3 \right] = o(1/k)
\end{equation*}
holds for every $i \in [k]$. Hence, fix some $i \in [k]$ for the remainder of the proof. Generate the sets $U_1, \ldots, U_k$ one by one. Assume that, for some $j \in [k]$, we have already sampled $U_1, \ldots, U_{j-1}$ and now wish to sample $U_i$. Let $N'_i = N_i \cap (U_1 \cup \ldots \cup U_{j-1})$ and let $N''_i = N_i \setminus N'_i$. Let $Z \sim \mathrm{HG}((1 - 2 \gamma) n, k/3, \gamma n)$; in particular $\mathbb{E}(Z) = \frac{\gamma k}{3 (1 - 2 \gamma)} \leq \gamma k/2$, where the inequality holds since $\gamma < 1/12$. Since $|\mathcal{X}| = \gamma n$ and $|U_1 \cup \ldots \cup U_{j-1}| = (j-1) \ell \leq k \ell = \gamma n$, it follows that $\Pr[|N'_i| \geq h \, | \, |N_i| = k/3] \leq \Pr[Z \geq h]$ holds for every $h$. An application of Chernoff's inequality (see, e.g.~\cite[Theorem~2.10]{JLR}) thus implies that
\begin{align*}
\Pr\left[|N''_i| \leq k/4 \; \Big| \; |N_i| = k/3\right] \leq \Pr\left[|N'_i| \geq \gamma k \; \Big| \; |N_i| = k/3\right] \leq \Pr[Z \geq 2 \mathbb{E}(Z)] \leq \exp (- c' k), 
\end{align*}
where the first inequality holds since $\gamma < 1/12$ and $c' := c'(\gamma) > 0$ is an appropriate constant. Therefore
\begin{align*}
\Pr\left[N_i \cap U_j = \emptyset \; \Big| \; |N''_i| \geq k/4\right] = \frac{\binom{n - |\mathcal{X} \cup U_1 \cup \ldots \cup U_{j-1} \cup N''_i|}{\ell}}{\binom{n - |\mathcal{X} \cup U_1 \cup \ldots \cup U_{j-1}|}{\ell}} \leq 1 - c'',
\end{align*}
where $c'' > 0$ is an appropriate constant (the latter inequality holds by the birthday paradox and can also be verified via a direct calculation). Therefore
$$
\Pr\left[N_i \cap U_j = \emptyset \, \Big| \, |N_i| = k/3 \right] \leq \Pr\left[|N''_i| \leq k/4 \, \Big| \, |N_i| = k/3 \right] + \Pr\left[N_i \cap U_j = \emptyset \, \Big| \, |N''_i| \geq k/4 \right] \leq 1 - c''/2.
$$
We conclude that $\Pr\left[N_i \cap U_j \neq \emptyset \, | \, |N_i| = k/3 \right] \geq c''/2$ for every $j \in [k]$, and this holds regardless of whether $N_i \cap \left(\bigcup_{r \in J} U_r \right)$ is empty or not for any $J \subseteq [k] \setminus \{j\}$. Let $X \sim \textrm{Bin}(k, c''/2)$; then $\Pr[u_i < h \, | \, |N_i| = k/3] \leq \Pr[X < h]$ holds for every $h$. It thus follows by Chernoff's inequality that
$$
\Pr\left[u_i < c'' k/4 \, \Big| \, |N_i| = k/3 \right] \leq \Pr\left[X < \mathbb{E}(X)/2 \right] \leq \exp (- c'' k/16) = o(1/k)
$$ 
as required.
\end{proof}

\bigskip

We are now ready to prove Theorem~\ref{thm:main-minors}.

\begin{proofof}{Theorem~\ref{thm:main-minors}}
Given $\varepsilon > 0$, set an auxiliary constant
\begin{equation} \label{eq::zeta}
0 < \zeta < \min\left\{\frac{1}{12},\frac{\varepsilon^2}{48}\right\}.
\end{equation}
Let $G$ be an $n$-vertex graph, with $n$ being sufficiently large, and set $\ell$ and $k$ to be as in~\eqref{eq:kl} with $\gamma = \zeta$.

Extending a result of Ajtai, Koml\'os, and Szemer\'edi~\cite{AKS81}, Krivelevich and Sudakov~\cite{KS13} (see also~\cite[Page~220]{AlonSpencer}) proved that $R \sim \mathbb{G}\left(n, (1 + \varepsilon)/n \right)$ a.a.s.~contains a path $P$ of length $\left\lfloor \frac{\varepsilon^2}{12}n \right\rfloor$. Let $P_1, \ldots, P_{2k}$ be a collection of $2 k $ vertex-disjoint sub-paths of $P$, each of length $\ell$. Such a partition exists since $2 k (\ell+1) \leq |V(P)|$ holds by~\eqref{eq:kl} and~\eqref{eq::zeta}. 

Any subgraph $K$ of $R \sim \mathbb{G}(n,p)$ is distributed uniformly over all copies of $K$ in $K_n$. To see this, note that $R\sim \mathbb{G}(n,p)$ can be generated as follows. First, generate $R' \sim \mathbb{G}(n,p)$. Then, pick a permutation $\pi \in S_n$ uniformly at random and set $R:=([n],\{\pi(u)\pi(v): uv \in E(R')\})$. The resulting distribution coincides with that of $\mathbb{G}(n,p)$. It follows that 
$(V(P_1),\ldots,V(P_k))$ and $(V(P_{k+1}),\ldots,V(P_{2k}))$ both form $(k,\ell)$-ensembles of $G$ with the {\sl added} property that the members across both ensembles are pairwise disjoint.   

Lemma~\ref{lem:ensemble}, applied to $(V(P_1),\ldots,V(P_k))$ and $(V(P_{k+1}),\ldots,V(P_{2k}))$ with $\gamma = \zeta$, asserts that a.a.s.~for every $i \in [k]$ there exists a set $N_i \subseteq N_G(V(P_i))$ satisfying Property~(E.1) and such that the sets $(N_1,\ldots,N_k)$ and $(V(P_{k+1}),\ldots,V(P_{2k}))$ satisfy Property~(E.2). It follows that $G \cup R$ a.a.s.~contains some graph $H$ as a minor such that $v(H) = 2k$, $e(H) = \Omega(k^2)$, and whose branch sets are $V(P_1),\ldots, V(P_{2k})$. The Kostochka-Thomason Theorem~\cite{AKS22,Kos84,Tho95} then asserts that $H$, and thus also $G \cup R$, contains a complete minor of order 
$$
\Omega\left(\frac{k}{\sqrt{\log k}}\right) = \Omega \left(\frac{n}{\sqrt{\log n} \cdot  \ell}\right)
$$
as required. 
%Lemma~\ref{lem:ensemle-round-2}, applied with $\eps, \eta$ and $((Y_1,\ldots,Y_k))$, asserts that a collection $(W_1,\ldots,W_k)$ satisfying Properties~(E.3-4) also exists in $G$. 
\end{proofof}

\section{Complete minors arising from sparse perturbations}\label{sec:minors-sparse-perturb}

In this section, we deduce Theorem~\ref{thm:main-minors-sparse} from Theorem~\ref{thm:main-minors}. 

\begin{proofof}{Theorem~\ref{thm:main-minors-sparse}}
Let $\eps \in (0,1)$ be fixed and set $\alpha := \alpha(\eps)$ to be the constant guaranteed by Theorem~\ref{thm:main-minors}; note that $0 < \alpha \leq 1$ . Set $c = \alpha/3$. Since $k = o(n)$, it follows that $p = o(k^{-2})$. Let $r = n/(2k)$; we claim that $G$ admits pairwise vertex-disjoint paths $P_1, \ldots, P_r$, each of length $k - 1$. Indeed, assuming that for some $i \in [r]$, the paths $P_1, \ldots, P_{i-1}$ have already been constructed, the path $P_i$ is found as follows. Let $G_i = G \setminus (V(P_1) \cup \ldots \cup V(P_{i-1}))$, so that $|V(G_i)| = n - (i-1) k \geq n - r k = n/2$. Let $G'_i$ be the graph obtained from $G_i$ by repeatedly discarding vertices of degree at most $k - 1$. Note that $\alpha(G'_i) \leq \alpha(G)$. Moreover, $G'_i$ is non-empty and thus $\delta(G'_i) \geq k$ holds by construction. Indeed, let $S = V(G_i) \setminus V(G'_i)$ and assume for a contradiction that $|S| \geq n/2$. Observe that $G_i[S]$ is $(k-1)$-degenerate, implying that 
$$
\alpha(G) \geq \alpha(G_i[S]) \geq \frac{|S|}{k} \geq \frac{n}{2k} > \frac{c n}{k} \geq \alpha(G),
$$ 
which is clearly a contradiction. It follows that $G'_i$ contains a path of length $k - 1$ which we denote by $P_i$.

Define two auxiliary graphs, namely $\Gamma_G$ and $\Gamma_R$, such that $V(\Gamma_G) = V(\Gamma_R) = \{u_1, \ldots, u_r\}$ and, for any $1 \leq i < j \leq r$, there is an edge of $\Gamma_G$ (respectively, $\Gamma_R$) connecting $u_i$ and $u_j$ if and only if $E_G(V(P_i), V(P_j)) \neq \emptyset$ (respectively,  $E_R(V(P_i), V(P_j)) \neq \emptyset$). Then  

%Given $P,Q \in \mathcal{P}$, these are set adjacent in $\Gamma_G$ and $\Gamma_R$ provided that $e_G(V(P),V(Q)) >0$ and $e_R(V(P),V(Q)) >0$ hold, respectively. Then, 
$$
\alpha(\Gamma_G) \leq \alpha(G)\leq \frac{c n}{k} \leq \alpha r
$$ 
and $\Gamma_R \sim \mathbb{G}(r, q)$, where $q \geq \frac{1+\eps}{r}$. To justify the latter inequality, note that 
$p k^2 = o(1)$ by assumption, implying that
\begin{align*}
\Pr[\{u_i, u_j\} \notin E(\Gamma_R)] &= (1-p)^{k^2} \leq \exp\left(- p k^2 \right) = 1- pk^2 + o(pk^2) \\
& \leq 1 - \frac{pk^2}{2} = 1 - \frac{4 k}{n} \leq 1 - \frac{1+\eps}{r}
\end{align*}
holds for any $1 \leq i < j \leq r$, where the last equality holds since $p = \frac{8}{nk}$ and the last inequality holds since $r = \frac{n}{2k}$ and $\eps < 1$. 

Since $r = \omega(1)$ holds by our assumption that $k = o(n)$, it follows by Theorem~\ref{thm:main-minors} that 
\begin{align*}
h(G \cup R) &\geq h(\Gamma_G \cup \Gamma_R) = \Omega\left(\frac{r}{\sqrt{\log r} \cdot \max\left\{\sqrt{\alpha(\Gamma_G)}, \log r \right\}} \right)  \\
&= \Omega \left(\frac{n}{\sqrt{\log n} \cdot \max\left\{\sqrt{\alpha(G)},\log n\right\} \cdot k} \right).
\end{align*} 
holds asymptotically almost surely. 
\end{proofof}

\newpage

\section{Hadwiger's conjecture for randomly perturbed vertex-transitive graphs}\label{sec:vx-trans}. 

%For the binomial random graph $\mathbb{G}(n, p)$, Hadwiger’s conjecture is known to hold true a.a.s., essentially throughout the whole range of $p := p(n)$. In this section, we prove that it holds a.a.s. for a class of randomly perturbed graphs. Before stating the main result of this section, recall that a graph $G$ is said to be \emph{vertex-transitive} if for any pair of vertices $(u, v) \in V(G) \times V(G)$, there is an automorphism of $G$ which maps $u$ to $v$. 

%\begin{theorem} \label{th::HadwigerVT}
%Let $0 < \varepsilon < 1$ be fixed and let $G$ be a vertex-transitive graph on $n$ vertices. Then, a.a.s. $h(G \cup R) \geq \chi(G \cup R)$, whenever $R \sim \mathbb{G}(n,p)$, $p := p(n) = (1 + \varepsilon)/n$, and $\alpha(G) = \omega \left(\frac{n^2 p^2 \log^3 n}{\log^2(np)} \right)$.
%\end{theorem}

In this section, we prove Proposition~\ref{th::HadwigerVT}. The following result is an immediate consequence of Proposition 1.3.4 and Lemma~1.6.4 from~\cite{SUbook}.

\begin{proposition} \label{prop::VTlog}
Let $G$ be a vertex-transitive graph on $n$ vertices. Then, $\chi(G) \leq (1 + \log n) n/\alpha(G)$.
\end{proposition}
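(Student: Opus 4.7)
The plan is to derive this bound directly, bypassing any explicit appeal to fractional chromatic number: combine an averaging argument (leveraging vertex-transitivity) with the standard greedy colour-removal procedure used to bound the chromatic number.

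The first step would be to establish the following key lemma: for any nonempty $S \subseteq V(G)$ the induced subgraph $G[S]$ satisfies $\alpha(G[S]) \geq |S| \alpha(G)/n$. To see this, fix a maximum independent set $I$ of $G$, so $|I| = \alpha(G)$, and let $\Gamma$ denote the automorphism group of $G$. For a uniformly random $\sigma \in \Gamma$, vertex-transitivity implies that every $v \in V(G)$ satisfies $\Pr[v \in \sigma(I)] = |I|/n = \alpha(G)/n$. Linearity of expectation yields $\Ex[|\sigma(I) \cap S|] = |S| \alpha(G)/n$, so some $\sigma \in \Gamma$ realises this bound. Since $\sigma(I)$ is independent in $G$, its intersection with $S$ is independent in $G[S]$, supplying the required large independent set.

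Next I would execute a greedy colouring procedure. Initialise $S_0 = V(G)$ and at each step $t \geq 0$ use the lemma to extract an independent set $I_t \subseteq S_t$ of size at least $\max\{1,\lceil |S_t| \alpha(G)/n \rceil\}$, assign it a fresh colour, and set $S_{t+1} = S_t \sm I_t$. While $|S_t| \geq n/\alpha(G)$, the recursion $|S_{t+1}| \leq |S_t|(1 - \alpha(G)/n)$ gives $|S_t| \leq n \exp(-t \alpha(G)/n)$, so after at most $\lceil (n/\alpha(G)) \log \alpha(G) \rceil$ colours the residual set has size at most $n/\alpha(G)$. Colouring these remaining vertices one by one uses at most $n/\alpha(G)$ additional colours. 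Summing, $\chi(G) \leq (n/\alpha(G))(1 + \log \alpha(G)) + O(1)$, which is at most $(1 + \log n) n/\alpha(G)$ once the $O(1)$ term is absorbed via $n/\alpha(G) \geq 1$ (the trivial boundary cases $\alpha(G) \in \{1,n\}$ are immediate).

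The only non-routine ingredient is the averaging lemma, and that rests on a one-line computation from vertex-transitivity; the subsequent greedy calculation is a standard set-cover bound, so no significant obstacle is anticipated.
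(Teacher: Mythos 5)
Your averaging lemma and greedy scheme are sound, and in substance they reconstruct exactly what the paper cites as a black box: the automorphism-averaging argument (every $S \subseteq V(G)$ contains an independent set of size at least $|S|\alpha(G)/n$) is the standard proof that $\chi_f(G) = n/\alpha(G)$ for vertex-transitive graphs, and the logarithmic greedy rounding is the standard proof of $\chi(G) \leq (1+\log n)\chi_f(G)$; so the route is the same, just written out self-containedly rather than quoted from Scheinerman--Ullman.

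There is, however, a genuine (though easily repaired) gap in your final accounting. Your two-phase count yields $\chi(G) \leq (n/\alpha(G))(1+\log \alpha(G)) + O(1)$, and you claim the additive constant is absorbed ``via $n/\alpha(G) \geq 1$''. Comparing with the target $(1+\log n)\,n/\alpha(G)$, what is actually needed is that $(n/\alpha(G))\log\left(n/\alpha(G)\right)$ exceeds the additive constant, and $x\log x \to 0$ as $x \to 1$; so the absorption fails whenever $\alpha(G) = (1-o(1))n$ with $G$ nonempty, a regime not covered by your boundary cases $\alpha(G) \in \{1,n\}$. Two quick fixes: (i) a vertex-transitive graph is regular, so if it has at least one edge then counting edges leaving a maximum independent set gives $\alpha(G) \leq n/2$, i.e.\ $n/\alpha(G) \geq 2$, and then $(n/\alpha(G))\log(n/\alpha(G)) \geq 2\log 2 > 1$ absorbs the ceiling term (the edgeless case being trivial); or, more cleanly, (ii) drop the two-phase bookkeeping and run the greedy until the vertex set is exhausted: after $T = \lfloor (n/\alpha(G))\log n \rfloor + 1$ rounds the residual set has size at most $n\exp\{-T\alpha(G)/n\} < 1$, hence is empty, giving $\chi(G) \leq (n/\alpha(G))\log n + 1 \leq (1+\log n)\,n/\alpha(G)$, where now $n/\alpha(G) \geq 1$ really is all that is used. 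With either repair the argument is complete.
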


\begin{proofof}{Proposition~\ref{th::HadwigerVT}}
Let $k, p$ and $G$ be as in the premise of the proposition, and let $c_2 > 0$ be the constant whose existence is ensured by Theorem~\ref{thm:main-minors-sparse}. Since $p = \frac{8}{nk}$ and $k > 8$, the results seen in~\cite[Chapter~2]{FK} assert that $\chi(R) \leq 3$ a.a.s. holds as at this edge-probability all components of the random graph are a.a.s. either trees or unicyclic. Consequently, 
$\chi(G \cup R) \leq \chi(G) \chi(R) \leq 3 \chi(G)$ 
holds asymptotically almost surely. On the other hand, Theorem~\ref{thm:main-minors-sparse}, Proposition~\ref{prop::VTlog}, and the lower bound imposed on $\alpha(G)$, where $c_1$ is assumed to be a sufficiently large constant, together imply that a.a.s.
$$
h(G \cup R) = \Omega \left(\frac{n}{\sqrt{\alpha(G) \log n} \cdot k}\right) \geq  3\chi(G).
$$
\end{proofof}
%if $h(G) \geq 3 \chi(G)$, then a.a.s. $h(G \cup R) \geq h(G) \geq 3 \chi(G) \geq \chi(G \cup R)$. Assume then that $h(G) < 3 \chi(G)$. It follows by Theorem~\ref{thm:main-minors} that

\section{Complete topological minors in randomly perturbed graphs} \label{sec::topMinor}

In this section, we prove Theorem~\ref{thm:tcl}. The core property of the random perturbation facilitating this proof is stated and established in Section~\ref{sec:rnd-prop}. A proof of Theorem~\ref{thm:tcl} can be seen in Section~\ref{sec:tcl-proof}. 

\subsection{Properties of random graphs}\label{sec:rnd-prop}

%For a graph $G$, write $\mathrm{diam}(G)$ to denote the diameter\footnote{For a graph $G$, $\mathrm{diam}(G) := \max_{u,v \in V(G)} d_G(u,v)$, where $d_G(u,v)$ is the length of a shortest $uv$-path.} of $G$. 
The main result of this section reads as follows. 

\begin{proposition}\label{prop:diam}
There exists a constant $C >0$ such that $R \sim \mathbb{G}(n,C/n)$ a.a.s. satisfies the property that every subset $U \subseteq [n]$ of size $|U| \geq 0.9n$ contains a subset $U' \subseteq U$ of size $|U'| \geq 0.8n$ such that $\mathrm{diam}(R[U']) \leq 3 \log_2 n$.
\end{proposition}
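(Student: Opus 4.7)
The plan is to prove Proposition~\ref{prop:diam} via a sprinkling decomposition combined with a BFS-expansion argument, followed by a union bound over all subsets $U$. Fix sufficiently large constants $C_1, C_2$ to be chosen and set $C := C_1 + C_2$; couple $R \sim \mathbb{G}(n, C/n)$ with independent $R_1 \sim \mathbb{G}(n, C_1/n)$ and $R_2 \sim \mathbb{G}(n, C_2/n)$ so that $R_1 \cup R_2 \subseteq R$.

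The first, and technically heaviest, step uses $R_1$. I would show that a.a.s.\ for every $U \subseteq [n]$ with $|U| \geq 0.9n$, there exists $U' \subseteq U$ with $|U'| \geq 0.8n$ such that every vertex $v \in U'$ has BFS-ball of radius $r_0 := \lfloor \frac{3}{2}\log_2 n \rfloor$ in $R_1[U']$ of size at least $\alpha n$, where $\alpha > 0$ is a small absolute constant. The underlying intuition is that BFS exploration from a vertex $v$ inside $R_1[U]$ stochastically dominates a supercritical branching process whose offspring distribution is essentially $\mathrm{Bin}(0.9n, C_1/n)$; for $C_1$ large, the mean is $\gg 2$, so the BFS frontier at least doubles at each step until the ball reaches size $\Theta(n)$. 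Vertices for which BFS fails to grow are collected into a \emph{bad} set $W$, and $U'$ is defined as $U \setminus W$. The core technical task is to show that $|W| \leq 0.1 n$ with failure probability $\exp(-\omega(n))$, uniformly over $U$, which then allows a union bound over the $2^n$ choices of $U$. Concentration along the BFS exploration is obtained via Chernoff-type bounds applied to the vertex-exposure martingale of the exploration, with the small-frontier regime handled separately—e.g., by splitting $R_1$ into two rounds and using the first to inflate initial BFS balls to at least logarithmic size.

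The second step, carried out using the independence of $R_2$, is much easier. For $C_2$ sufficiently large, a.a.s.\ every pair of disjoint subsets $A, B \subseteq [n]$ with $|A|, |B| \geq \alpha n$ has at least one $R_2$-edge between them, as
\[
\Pr[E_{R_2}(A,B) = \emptyset] \leq \left(1 - \frac{C_2}{n}\right)^{|A||B|} \leq \exp(-C_2 \alpha^2 n),
\]
and a union bound is taken over the at most $4^n$ such pairs. Now for every $U$ with $|U| \geq 0.9n$, take $U'$ from the first step; any two vertices $u, v \in U'$ have BFS balls $B_{R_1[U']}(u, r_0)$ and $B_{R_1[U']}(v, r_0)$ of size $\geq \alpha n$ each, and these either intersect or are joined by an $R_2$-edge, producing a $uv$-path in $(R_1 \cup R_2)[U'] \subseteq R[U']$ of length at most $2 r_0 + 1 \leq 3 \log_2 n$ for $n$ large enough.

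The main obstacle is the BFS-expansion step: producing BFS growth up to linear size in $O(\log n)$ steps, uniformly over $U$, with failure probability $\exp(-\omega(n))$. The delicate aspect is the early stages of BFS, where the frontier is still sublinear and Chernoff bounds give only weak deviations; the trick is to allow a small exceptional set $W$ of vertices where BFS stalls and absorb $W$ into the $0.1 n$ slack permitted by passing from $0.9n$ down to $0.8n$.
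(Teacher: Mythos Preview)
Your high-level two-step structure --- grow BFS balls to linear size, then connect any two such balls by a single edge --- matches the paper's, and your second step is essentially its Lemma~\ref{lem:disc}. The gap is in the first step, and it is precisely the ``main obstacle'' you flag: you need, for \emph{every} $U$ with $|U|\geq 0.9n$, that $|W|\leq 0.1n$ with failure probability $e^{-\omega(n)}$, but the sketch you give does not deliver this. First, there is a circularity: you run BFS inside $R_1[U']$ while defining $U'=U\setminus W$ via the vertices where that very BFS stalls. Second, even if you break the circularity by running BFS in $R_1[U]$, the martingale/Chernoff control you propose yields at best polynomial-in-$n$ tails for a single vertex's exploration once the frontier is polylogarithmic; aggregating over $n$ vertices and then union-bounding over $2^n$ sets $U$ does not follow, since the stalling events are heavily correlated and there is no evident $e^{-\Theta(n)}$ bound on $\Pr[|W|>0.1n]$ via this route.

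The paper sidesteps this entirely, and that is the idea you are missing. It never analyses BFS as a random process. Instead it proves once, a.a.s., three \emph{edge-count} properties of $R$ (each a union bound over vertex sets, not over $U$): (i) any disjoint $X,W$ with $|X|=0.1n$, $|W|\geq 0.8n$ satisfy $e_R(X,W)\geq 0.05Cn$; (ii) any $X$ with $|X|\leq n/e^{10}$ spans fewer than $C|X|/8$ edges; (iii) any two disjoint linear-size sets are joined by an edge. Given a fixed $U$, property~(i) implies that iteratively deleting vertices of degree $<C/2$ from $R[U]$ removes fewer than $0.1n$ vertices, yielding $U'$ with $\delta(R[U'])\geq C/2$ deterministically. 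Then BFS doubling is a purely deterministic consequence of~(ii): if a ball $B\subseteq U'$ has $|N_{R[U']}(B)|\leq |B|\leq n/(2e^{10})$, then $e_R(B\cup N(B))\geq |B|\,\delta(R[U'])/2\geq C|B\cup N(B)|/8$, contradicting~(ii). Thus every ball doubles until it reaches size $n/(2e^{10})$, and~(iii) finishes as in your second step. The point is that the union bound lives only inside the three edge-distribution lemmas, where it is routine; no per-$U$ or per-vertex random BFS analysis is needed.
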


%Our proof of Proposition~\ref{prop:diam} essentially establishes that $R[U']$ is an adequate {\sl vertex-expander} and consequently has a `low' diameter. 

Before proving Proposition~\ref{prop:diam}, we collect some properties of random graphs facilitating our proof.

%Facilitating our proof of Proposition~\ref{prop:diam}, are the following properties. 
%
%
%In the following results, we collect the various properties of typical members of $\mathbb{G}(n,C/n)$ facilitating our proof of Proposition~\ref{prop:diam}. We start with the following easy consequences of Chernoff's inequality. 
%
%\begin{lemma}\label{lem:disc2}
%Let $C>1$ be fixed. Then, $G \sim \mathbb{G}(n,C/n)$ a.a.s. has the property that $e_G(X,Y) >0$ whenever $X,Y \subseteq [n]$ are disjoint and satisfy $|X|,|Y| \geq n/C^{700}$.  
%\end{lemma}
%
%\begin{lemma}\label{lem:span}
%Let $C>1$ be fixed. Then, $G \sim \mathbb{G}(n,C/n)$ a.a.s. has the property that $e_G(X) >0$ whenever $X\subseteq [n]$ satisfies $|X| \geq n/C^{700}$.  
%\end{lemma}

%\begin{lemma}\label{lem:disc-2}
%There exists a $C>0$ such that $G \sim \mathbb{G}(n,C/n)$ a.a.s. has the property that $e_G(X,Y) >0$ whenever $X,Y \subseteq $
%\end{lemma}

%The next lemma asserts that a constant fraction of every linear sized vertex subset of $G \sim \mathbb{G}(n,C/n)$ induces a subgraph with minimum degree at least $\Omega_C(1)$ provided $C$ is sufficiently large. 

%The next property we require reads as follows. 

\begin{lemma}\label{lem:min-deg}
There exists a constant $C > 0$ such that $R \sim \mathbb{G}(n,C/n)$ a.a.s. satisfies the property that for every $U \subseteq [n]$ of size $|U| \geq 0.9 n$, there exists a subset $U' \subseteq U$ of size $|U'| \geq 0.8 n$ such that $\delta(R[U']) \geq C/2$. 
\end{lemma}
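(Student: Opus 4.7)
The plan is a standard peeling argument combined with a union bound. I would fix a sufficiently large constant $C$ (to be determined), sample $R \sim \mathbb{G}(n, C/n)$, and for each $U \subseteq [n]$ with $|U| \geq 0.9n$, iteratively remove from $U$ any vertex whose current degree in the surviving subgraph of $R$ is below $C/2$; call the final surviving set $U'$. By construction $\delta(R[U']) \geq C/2$, so it suffices to show that a.a.s., for every such $U$ simultaneously, the peeling process removes at most $0.1 n$ vertices.

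The key conceptual step is to extract a clean static witness of failure from the inherently sequential peeling. Suppose the peeling removes more than $0.1 n$ vertices for some $U$; let $v_1, \ldots, v_s$ with $s := \lceil 0.1 n \rceil$ be the first $s$ vertices removed in order, set $S = \{v_1, \ldots, v_s\}$, and fix any $W \subseteq U \setminus S$ of size about $0.8 n$ (possible since $|U \setminus S| \geq 0.8 n - 1$). When $v_i$ was removed it had fewer than $C/2$ neighbours in $U \setminus \{v_1, \ldots, v_{i-1}\} \supseteq W$; summing these inequalities over $i$ yields $e_R(S, W) < sC/2$. Thus it suffices to show that a.a.s.\ no disjoint pair $(S, W)$ with $|S| = s$, $|W|$ of order $0.8 n$, and $e_R(S, W) < sC/2$ exists in $R$.

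For any such fixed pair $(S, W)$, the variable $e_R(S, W)$ is distributed as $\mathrm{Bin}(|S| \cdot |W|, C/n)$ with mean approximately $0.08 Cn$; since the threshold $sC/2$ sits below this mean by a constant multiplicative factor, Chernoff's inequality yields $\Pr[e_R(S, W) < sC/2] \leq \exp(-\Omega(Cn))$. The number of such ordered disjoint pairs is at most $4^n$, so a union bound bounds the total failure probability by $4^n \cdot \exp(-\Omega(Cn))$, which is $o(1)$ once $C$ is chosen large enough to overcome the $\log 4$ arising from the enumeration.

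The principal obstacle is the reduction itself: one has to recognize that the sequential peeling failure is witnessed by a single static inequality on the bipartite edge count between $S$ and $W$, and that the relevant sizes $|S|$ and $|W|$ remain well-separated because the gap between $0.9 n$ and $0.8 n$ matches $|S|$. Once this reduction is in place, the remainder of the argument is a textbook Chernoff plus union-bound computation, and the only calibration needed is to pick $C$ large enough for the union bound to close.
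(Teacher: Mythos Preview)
Your proposal is correct and follows essentially the same approach as the paper: both run the peeling process, observe that failure produces disjoint sets $S$ (the first $0.1n$ removed vertices) and $W$ of size about $0.8n$ with $e_R(S,W) < 0.05Cn$, and then kill this via Chernoff plus a $4^n$ union bound over pairs. If anything, your extraction of the static witness is stated slightly more carefully than the paper's, which takes an ``arbitrary'' subset $X \subseteq U \setminus U'$ where really the first removed vertices are what make the degree bound into $W$ immediate.
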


\begin{proof}
A standard application of Chernoff's inequality shows that the probability that there are disjoint sets $X$ and $W$ such that $|X| = 0.1 n$, $|W| \geq 0.8 n$, and $e_R(X, W) < 0.05 C n$ is at most
$$
2^n \cdot 2^n \cdot \Pr \left[\textrm{Bin} \left(0.08 n^2, C/n \right) \leq 0.05 C n \right] < 4^n e^{- \Theta(C) n} = o(1),
$$
where the last equality holds for a sufficiently large constant $C$. Assume then, for the remainder of the proof, that $e_R(X, W) \geq 0.05 C n$ whenever $X$ and $W$ are disjoint sets of sizes $|X| = 0.1 n$ and $|W| \geq 0.8 n$.

%Fix $U \subseteq [n]$ of size $|U| \geq 0.9 n$. Using the fact that $C$ is sufficiently large, a standard application of Chernoff's inequality (see, e.g.~\cite[Theorem~2.1]{JLR}) and a union bound shows that 
%\begin{equation} \label{eq::NoSparseCuts}
%\mathbb{P} \left(e_{R[U]}(X, U \setminus X \right) < 0.05 C n) \leq e^{-2n}
%\end{equation}
%holds for every set $X \subseteq U$ of size $|X| \geq 0.1n$.

Fix some $U \subseteq [n]$ of size $|U| \geq 0.9 n$. Repeatedly remove vertices of $U$ whose degree in the current subgraph of $R[U]$ is strictly smaller than $C/2$; denote the resulting subset of $U$ by $U'$. Suppose for a contradiction that $|U'| < 0.8 n$. Let $X$ be an arbitrary subset of $U \setminus U'$ of size $0.1 n$ and let $W = U \setminus X$; note that $|W| \geq 0.8 n$. Therefore
$$
0.05 C n \leq e_R(X, W) < |X| \cdot C/2 = 0.05 C n.
$$
This contradiction concludes the proof of the lemma as $\delta(R[U']) \geq C/2$ holds by construction.
\end{proof}
%By construction, if $U' \neq \emptyset$, then $\delta(R[U']) \geq C/2$; we prove that in fact $|U'| \geq 0.8 n$ holds with very high probability. Let $D = U \setminus U'$. Then
%$$
%\mathbb{P}(|U'| < 0.8 n) \leq \mathbb{P}(|D| \geq 0.1 n) \leq \mathbb{P} \left(e_{R[U]}(D, U') < 0.1 n \cdot C/2 \right) \leq e^{-2n},
%$$
%where the last inequality holds by~\eqref{eq::NoSparseCuts}. A union bound over all choices of $U$ concludes the proof.

%, set $U_0:= U$, and consider $i \in \mathbb{Z}_{\geq 0}$ for which $U_i$ is defined. If $\delta(G[U_i]) \geq C/2$, set $U':= U_i$; otherwise, let $u \in U_i$ for which $\deg_{G[U_i]}(u) < C/2$ holds and set $U_{i+1} := U_i \sm \{u\}$. 

%The inequality $e_{G[U]}(D,U \sm D) < C \cdot |D| /2$ holds throughout the above vertex-deletion process, where $D$ denotes the set of deleted vertices. If there exists an $i \in \mathbb{Z}_{\geq 0}$ 
%for which $|D| =  \lfloor 0.1 n \rfloor$ holds, then the inequality $e_{G[U]}(D,U \sm D) < C \cdot 0.1 n / 2$ holds. An application of Chernoff's inequality (see, e.g.~\cite[Theorem~2.1]{JLR}) coupled with $C$ being sufficiently large, asserts that such an inequality holds with probability at most $e^{-2n}$. A union bound over all choices of $U$ concludes the proof.

The next two lemmas consider the edge distribution of random graphs. %; combined, they imply that such graphs are a.a.s. good expanders.

%The next two properties are the two core properties supporting the required level of vertex-expansion necessitated by our proof of Proposition~\ref{prop:diam}. 

\begin{lemma}\label{lem:span}
There exists a constant $C > 0$ such that $R \sim \mathbb{G}(n,C/n)$ a.a.s. satisfies the property that $e_R(X) < C|X|/8$ whenever $X \subseteq [n]$ is of size $|X| \leq n/e^{10}$. 
\end{lemma}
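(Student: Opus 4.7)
The plan is a direct first-moment / union-bound calculation, in the style of Lemma~\ref{lem:min-deg}. Fix a constant $C$ to be chosen sufficiently large (say $C \geq 80$). For each integer $x$ with $1 \leq x \leq n/e^{10}$, I would estimate the expected number of subsets $X \subseteq [n]$ of size $x$ satisfying $e_R(X) \geq Cx/8$, and then sum these estimates.

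For small sizes, namely $x \leq C/4$, the inequality is trivial because $e_R(X) \leq \binom{x}{2} < Cx/8$ deterministically. For $x > C/4$, the variable $e_R(X) \sim \mathrm{Bin}(\binom{x}{2}, C/n)$ has mean at most $Cx^2/(2n) \leq Cx/(2 e^{10})$, so the target value $Cx/8$ exceeds the mean by a factor of order $e^{10}$ and the event is a genuine large-deviation tail. Applying the standard inequality $\Pr[\mathrm{Bin}(N,p) \geq k] \leq (eNp/k)^k$ yields
\[
\Pr[e_R(X) \geq Cx/8] \;\leq\; (4ex/n)^{Cx/8},
\]
and multiplying by $\binom{n}{x} \leq (en/x)^x$ bounds the expected number of violating sets of size $x$ by
\[
\bigl[\,e\,(4e)^{C/8}\,(x/n)^{C/8-1}\,\bigr]^{x}.
\]
Using $x/n \leq e^{-10}$, the bracketed quantity is at most $e^{11}(4/e^9)^{C/8}$, which is a small constant $\delta < 1/2$ once $C$ is large enough.

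The final step is upgrading the resulting tiny-but-constant bound on the total failure probability to $o(1)$. I would do this by splitting the range of $x$: for $x \leq \log n$, the factor $(x/n)^{C/8-1}$ is polynomially small in $n$, so each summand is $n^{-\Omega(x)}$ and the partial sum is $O(n^{-\Omega(1)}) = o(1)$; for $x > \log n$, the uniform bound $\delta^x \leq \delta^{\log n}$ is super-polynomially small, and its geometric tail also contributes $o(1)$. The only nontrivial point is this final two-range split; the rest of the argument is a routine Chernoff-plus-union-bound computation, and the particular constants $e^{10}$ and $Cx/8$ in the statement leave comfortable slack for the calculation to succeed.
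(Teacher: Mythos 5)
Your proposal is correct and takes essentially the same route as the paper's proof: a union bound over set sizes with the binomial tail estimate $\bigl(eNp/k\bigr)^k$, yielding a summand of the form $\bigl[e^{O(C)}(x/n)^{C/8-1}\bigr]^x$, followed by the same split of the range at $x=\log n$. One minor wording slip: for a fixed constant $\delta<1$ the bound $\delta^{\log n}=n^{\log\delta}$ is only polynomially small, not super-polynomially small, but the geometric tail $\sum_{x>\log n}\delta^x=O\!\left(\delta^{\log n}\right)=o(1)$ still closes the argument exactly as you say.
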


\begin{proof}
The probability that there exists a set $X \subseteq [n]$ of size $|X| \leq n/e^{10}$ such that $e_R(X) \geq C|X|/8$ is at most   
\begin{align*}
\sum_{t =1}^{n/e^{10}}\binom{n}{t} \binom{t^2}{Ct/8}\left(\frac{C}{n}\right)^{Ct/8} & \leq \sum_{t=1}^{n/e^{10}} \left(\frac{en}{t} \cdot \left(\frac{8 e t}{C} \right)^{C/8} \cdot \left( \frac{C}{n}\right)^{C/8}\right)^t \leq \sum_{t=1}^{n/e^{10}} \left(e^C \left(\frac{t}{n}\right)^{C/8-1} \right)^t  \\
& \leq \sum_{t=1}^{\log n} \left(e^C \left(\frac{t}{n}\right)^{C/9} \right)^t + \sum_{t=\log n}^{n/e^{10}} \left(e^C \left(\frac{t}{n}\right)^{C/9} \right)^t \\
& \leq \log n \cdot \frac{\log n}{n} + n \left(e^{C - 10 \cdot C/9}\right)^{\log n} = o(1) + n e^{-C \log n/9} = o(1),
\end{align*}
where throughout we rely on $C$ being sufficiently large. 
\end{proof}

%For a positive integer $t \leq n/e^{10}$, let $\mathcal{E}_t$ denote the event that there exists a set $X \subseteq [n]$ of size $t$ such that $e_R(X) \geq C t/8$. 

%\begin{lemma}\label{lem:too-dense}
%There exists a constant $C>0$ such that a.a.s. $R \sim \mathbb{G}(n,C/n)$ satisfies the property that $e_R(X,Y) < C|X|/4$, whenever $X,Y \subseteq [n]$ are disjoint, $|X| \leq n/e^6$, and $|Y| = 2|X|$. 
%\end{lemma}

%\begin{proof}
%For a positive integer $t \leq n/e^6$, let $\mathcal{E}_t$ denote the event that there exist two disjoint sets $X,Y \subseteq [n]$ such that $|X| = t$, $|Y| = 2t$, and  $e_R(X,Y) \geq C t/4$. The probability that there exist disjoint sets $X,Y \subseteq [n]$ of sizes $|X| \leq n/e^6$ and $|Y| = 2|X|$ such that $e_R(X,Y) \geq C|X|/4$ is at most
%\begin{align*}
%\sum_{t=1}^{n/e^6} \Pr[\mathcal{E}_t] & \leq \sum_{t=1}^{n/e^6} \binom{n}{t}\binom{n}{2t}\binom{2t^2}{Ct/4}\left(\frac{C}{n} \right)^{Ct/4} \\
%& \leq \sum_{t=1}^{n/e^6} \left(\frac{en}{t} \cdot \left(\frac{en}{2t}\right)^{2} \cdot \left(\frac{8et}{C}\right)^{C/4}\cdot \left(\frac{C}{n}\right)^{C/4} \right)^t\\
%& \leq \sum_{t=1}^{n/e^6} \left(e^C \left(\frac{t}{n} \right)^{C/4-3} \right)^t \\
%& \leq \sum_{t=1}^{\log n} \left(e^C \left(\frac{t}{n} \right)^{C/5} \right)^t +  \sum_{t=\log n}^{n/e^6} \left(e^C \left(\frac{t}{n} \right)^{C/5} \right)^t \\
%& \leq \log n \cdot \frac{\log n}{n} + n\left(e^{C - 6C/5} \right)^{\log n} \\
%& \leq o(1) + n e^{-C \log n/5} = o(1),
%\end{align*}
%where throughout we rely on $C$ being sufficiently large. 
%\end{proof}

\begin{lemma}\label{lem:disc}
There exists a constant $C > 0$ such that $R \sim \mathbb{G}(n,C/n)$ a.a.s. satisfies the property that $e_R(X,Y) > 0$ whenever $X,Y \subseteq [n]$ are disjoint subsets of sizes $|X|, |Y| \geq n/(2e^{10})$. 
\end{lemma}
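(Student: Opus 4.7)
The plan is to establish this property by a direct first-moment/union-bound argument, since it is essentially the claim that $\mathbb{G}(n,C/n)$ has no empty bipartite cut between two linear-sized sets once $C$ is large enough. I would first fix disjoint $X, Y \subseteq [n]$ of sizes $a := |X|$ and $b := |Y|$, both at least $n/(2 e^{10})$, and bound $\Pr[e_R(X,Y) = 0] = (1 - C/n)^{ab} \leq e^{-Cab/n}$. Using the minimum size, this is at most $\exp\bigl(-C n / (4 e^{20})\bigr)$.

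Next, I would union bound over all eligible ordered pairs $(X,Y)$ of disjoint subsets. A convenient overcount is to observe that the number of ways to partition some subset of $[n]$ into three parts (those going into $X$, into $Y$, and into neither) is at most $3^n$, so the total number of candidate pairs is at most $3^n$. Multiplying gives
\[
\Pr\bigl[\exists\, X, Y \text{ with } e_R(X,Y) = 0\bigr] \;\leq\; 3^n \cdot \exp\!\bigl(-C n / (4 e^{20})\bigr),
\]
which is $o(1)$ once $C$ is chosen larger than, say, $8 e^{20} \log 3$. Taking the constant $C$ of Lemma~\ref{lem:disc} to be the maximum of the constants required by Lemmas~\ref{lem:min-deg} and~\ref{lem:span} and the bound just derived delivers a single $C$ that works for all three statements simultaneously, which is what Proposition~\ref{prop:diam} will ultimately require.

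There is no real obstacle here: the whole argument is a one-line Chernoff/Markov-style estimate once the exponent $Cab/n$ is seen to dominate the entropy cost $n \log 3$. The only minor point to be careful about is that $X$ and $Y$ are allowed to have arbitrary sizes $\geq n/(2e^{10})$, so the union bound should range over all such pairs rather than fixed sizes; this is handled cleanly by the $3^n$ overcount. No further ingredients (such as deletion, alteration, or concentration of more refined random variables) are needed.
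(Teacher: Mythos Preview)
Your proposal is correct and follows essentially the same approach as the paper: bound $\Pr[e_R(X,Y)=0]\le\exp(-Cn/(4e^{20}))$ and then union-bound over all admissible pairs. The only cosmetic difference is that the paper overcounts the pairs by $2^n\cdot 2^n=4^n$ rather than your $3^n$, which makes no difference to the argument.
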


\begin{proof}
The probability that there exist two disjoint subsets $X,Y \subseteq [n]$ of sizes $|X|, |Y| \geq n/(2e^{10})$ such that $E_R(X,Y) = \emptyset$ is at most
$$
2^n \cdot 2^n \cdot (1 - C/n)^{n^2/(4 e^{20})} \leq 4^n \cdot \exp \left\{- C n/(4 e^{20}) \right\} = o(1), 
$$
where the last equality holds for a sufficiently large constant $C$.
\end{proof}

We are ready to prove Proposition~\ref{prop:diam}. 

\begin{proofof}{Proposition~\ref{prop:diam}}
Let $C$ be a sufficiently large constant and let $R \sim \mathbb{G}(n,C/n)$; for the remainder of the proof we assume that $R$ satisfies the properties stated in Lemmas~\ref{lem:min-deg},~\ref{lem:span}, and~\ref{lem:disc}. Fix $U \subseteq [n]$ of size $|U| \geq 0.9 n$. Then, there exists a subset $U' \subseteq U$ of size $|U'| \geq 0.8 n$ such that $\delta(R[U']) \geq C/2$; denote $R[U']$ by $R'$. Given $u \in U'$ and a non-negative integer $i$, let $B_u^{(i)}$ denote the set of vertices in $U'$ whose distance from $u$ in $R'$ is at most $i$. Fix some $u \in U'$ and some non-negative integer $k$ for which $\left|B_u^{(k)}\right| \leq n/(2 e^{10})$. Denote $B_u^{(k)}$ by $X$ and let $Y = N_{R'}(X) = B_u^{(k+1)} \setminus B_u^{(k)}$. Suppose for a contradiction that $|Y| \leq |X|$. Then $|X \cup Y| \leq n/e^{10}$ and 
$$
e_{R}(X \cup Y) \geq e_{R'}(X \cup Y) \geq |X| \delta(R')/2 \geq C |X|/4 \geq C |X \cup Y|/8,
$$ 
contrary to the assertion of Lemma~\ref{lem:span}. It follows that $\left|B_u^{(k+1)} \right| = |X \cup Y| \geq 2|X| = 2 \left|B_u^{(k)} \right|$. Therefore, $\left|B_u^{(i)} \right| \geq \min \left\{2^i, n/(2 e^{10}) \right\}$ holds for any $u \in U'$ and any non-negative integer $i$. We conclude that $B_u^{(\log_2 n)} \cap B_v^{(\log_2 n)} \neq \emptyset$ or $e_{R'} \left(B_u^{(\log_2 n)},B_v^{(\log_2 n)} \right) > 0$ holds for any pair of distinct vertices $u,v \in U'$, giving rise to a $uv$-path in $R'$ of length at most $2 \log_2 n +1 \leq 3 \log_2 n$ between any two such vertices. 
\end{proofof}

\subsection{Proof of Theorem~\ref{thm:tcl}}\label{sec:tcl-proof}

Prior to proving Theorem~\ref{thm:tcl}, we state and prove the following lemma which is used in order to identify the branch vertices of the topological minor proclaimed to a.a.s. exist by this theorem.

\begin{lemma}\label{lem:stars}
Let $s$ and $n$ be positive integers satisfying $64 \leq s^2 \leq n$ and let $G$ be an $n$-vertex graph of minimum degree $\delta(G) \geq s$. Then, $G$ contains a family of $s/8$ pairwise vertex-disjoint copies of  $K_{1,s/4}$. 
\end{lemma}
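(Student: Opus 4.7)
The plan is to construct the $s/8$ desired stars greedily. Maintain a set $U \subseteq V(G)$ of vertices already used by previously-constructed stars (initially $U = \emptyset$). At each of the $s/8$ iterations, I would select a vertex $v \in V(G) \setminus U$ satisfying $|N_G(v) \setminus U| \geq s/4$, form the next $K_{1,s/4}$ by designating $v$ as the centre and choosing any $s/4$ unused neighbours of $v$ as its leaves, and then augment $U$ by the $s/4 + 1$ new vertices.

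The crux of the proof will be verifying that an eligible $v$ exists at the start of every iteration. At iteration $i$, $|U| = (i-1)(1+s/4) \leq (s/8)(1+s/4) < s^2/16$, and since $n \geq s^2$, we have $|V(G) \setminus U| > n - s^2/16 \geq 15 s^2/16$. Were no eligible $v$ to exist, every $v \in V(G) \setminus U$ would satisfy $|N_G(v) \cap U| > \deg_G(v) - s/4 \geq 3s/4$, yielding by summation
\[
e_G\bigl(U,\,V(G) \setminus U\bigr) \;>\; \tfrac{3s}{4} \cdot |V(G) \setminus U| \;\geq\; \tfrac{45}{64}\, s^3.
\]
The trivial upper bound $e_G(U, V(G) \setminus U) \leq |U|(n - |U|)$ (by dividing through by $n - |U|$) immediately forces $|U| > 3s/4$, which already contradicts $|U| < s^2/16$ for small values of $s$ (roughly $s \leq 24$, and certainly for iterations in which $|U|$ is still modest). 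For later iterations at larger $s$, the argument would be sharpened via the tighter $e_G(U, V(G) \setminus U) \leq \sum_{u \in U} \deg_G(u)$, and combined with a selection rule that keeps $\sum_{u \in U} \deg_G(u)$ linear in $|U|$ to produce a clean contradiction via $n \geq s^2$.

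The hard part will be arguing that the greedy can indeed keep $\sum_{u \in U} \deg_G(u) = O(s \cdot |U|)$ throughout. I envisage a two-pronged approach: when $G$ has bounded maximum degree (say $\Delta(G) \leq 11s$), every choice of centre is automatically low-degree and the required bound follows immediately from $|U| < s^2/16$; when, on the other hand, $G$ contains a vertex of degree exceeding $\tfrac{45}{4}s$, that vertex is placed as a centre at the earliest opportunity, since it is eligible at every iteration regardless of the current $|U|$ (its neighbourhood outside $U$ has size at least $\tfrac{45}{4}s - |U| \gg s/4$), thereby removing it from play and reducing to the bounded-degree regime for the remaining steps. The delicate aspect is interleaving these two modes correctly so as to complete all $s/8$ iterations without violating the degree bound on $U$, and this is where the slack provided by $n \geq s^2$ enters decisively.
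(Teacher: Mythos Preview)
Your greedy framework is natural, but the argument as outlined has a genuine gap at precisely the step you yourself flag as ``the hard part.'' The naive edge count yields only $|U| > 3s/4$, which is compatible with $|U| < s^2/16$ for all $s \geq 13$; so from roughly the fourth iteration onward the simple argument gives nothing.

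Your proposed remedy---keeping $\sum_{u \in U} \deg_G(u) = O(s\,|U|)$ via a two-pronged selection rule---does not work as stated. First, the two prongs pull in opposite directions: placing a high-degree vertex as a centre \emph{adds} its large degree to $\sum_{u \in U} \deg_G(u)$, which is exactly the quantity you are trying to keep small. Second, the claim that a vertex of degree exceeding $\tfrac{45}{4}s$ is ``eligible at every iteration regardless of the current $|U|$'' is false: once $|U| > 11s$ (which happens long before $|U|$ reaches $s^2/16$, for any $s > 176$) such a vertex need not have $s/4$ unused neighbours. Third, removing a single high-degree vertex does not reduce to the bounded-degree regime---there may be many vertices of degree above $11s$. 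Finally, even granting a good rule for centres, your scheme places no control on the degrees of the \emph{leaves}, which also lie in $U$ and contribute to the sum.

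The paper sidesteps all of this with one idea you are missing: pass to a spanning bipartite subgraph $B = (X \cup Y, E)$ of minimum degree $s/2$ (a maximum cut supplies this), insist that every centre lie in $X$ and every leaf lie in $Y$, and take a maximal such family of stars. If it has fewer than $s/8$ members, then $|L| < s^2/32$, every vertex of $Y \setminus L$ retains at least $3s/8$ neighbours in $X \setminus M$, and averaging over $X \setminus M$ (using $|Y| \geq n/2$ and $n \geq s^2$) produces a vertex of $X \setminus M$ with $s/4$ available neighbours in $Y \setminus L$, contradicting maximality. The bipartition is the key device: by segregating centres and leaves onto opposite sides, the leaves already used cannot obstruct the search for new centres, and no degree bookkeeping is needed.
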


\begin{proof}
Let $B = (X \discup Y, E)$ be a spanning bipartite subgraph of $G$ of minimum degree $\delta(B) \geq s/2$; any maximum cut of $G$ defines such a bipartite subgraph. Without loss of generality, assume that $|X| \leq n/2$. Let $S_1, \ldots, S_t$ be a maximal collection of pairwise vertex-disjoint copies of $K_{1, s/4}$ in $B$, each having its centre vertex\footnote{For $n \geq 2$, the unique vertex of $K_{1,n}$ whose degree exceeds one is called its {\em centre vertex}.} residing in $X$. Let $M$  denote the set of all centre vertices of said copies and let $L$ denote the set of all leaves of these copies.  %\footnote{Vertices of degree one in $K_{1,n}$ are called its {\em leaves}.} 

Assume for a contradiction that $t < s/8$ which in turn implies that $|L| = ts/4 < s^2/32$. Each vertex in $Y \sm L$ has at least $s/2 - t > s/2 - s/8 = 3s/8$ neighbours in $X \sm M$. In particular, 
$$
e_B(X \sm M, Y \sm L) \geq |Y \sm L| \cdot 3s/8 \geq (n/2 - s^2/32) \cdot 3s/8 \overset{s^2 \leq n}{\geq} ns/8
$$
holds, implying that there exists a vertex $x \in X \sm M$ satisfying 
$$
\deg_{B'}(x) \geq \frac{ns/8}{n/2} = s/4,
$$
where $B' := B[X \sm M, Y \sm L]$. 
This contradicts the maximality of the collection $S_1,\ldots,S_t$. 
\end{proof}

We are ready to prove Theorem~\ref{thm:tcl}. 

\begin{proofof}{Theorem~\ref{thm:tcl}}
% large constant such that $R \sim \mathbb{G}(n,C/n)$ a.a.s. satisfies the property stated in
Let $C >0$ be a constant as in the statement of Proposition~\ref{prop:diam}. Set 
$$
\ell = \min \left\{ \delta(G) / 8,  \sqrt{n/60\log_2 n} \right\},
$$
and let $S_1, \ldots, S_\ell$ be a collection of vertex-disjoint copies of $K_{1,2\ell}$ in $G$ ({\sl stars}, hereafter); such a collection exists in $G$ by Lemma~\ref{lem:stars}. Let $M$ denote the set of centre vertices of these stars; in what follows we provide a probabilistic construction of a topological $K_\ell$-minor  whose branch vertices coincide with $M$. 

For every $i \in [\ell]$ let $L_i$ denote the leaf vertices of $S_i$; set $L = \bigdiscup_{i \in [\ell]} L_i$, and fix an arbitrary linear ordering of the members of $L_i$ for every $i \in [\ell]$. Let $Z = V(G) \sm (M \cup L)$ and note that $|Z| \geq 0.95 n$ as $|M| + |L| = \ell + 2\ell^2 \leq n/\log_2 n = o(n)$. 
Expose the perturbation $R \sim \mathbb{G}(n,C/n)$ over $Z$ only and consider the following construction. Set $k=1$ and initialise $Z_k := Z$ and $L_i^{(k)} := L_i$ for every $i \in [\ell]$. Fix an arbitrary linear ordering of the elements of $\mathcal{P} := \{(i,j) : 1 \leq i < j \leq \ell\}$. While $k \leq \binom{\ell}{2}$, perform the following steps.%For every pair of indices $1 \leq i < j \leq \ell$ perform the following steps; increment $k$ between consecutive pairs in the ordering:
\begin{enumerate}[labelwidth=1.5cm,labelindent=10pt,leftmargin=2.2cm,label=\bfseries Step \arabic*.,align=left]
	\item [Step 1.] Let $U_k \subseteq Z_k$ be a subset 
	of size $|U_k| \geq 0.8n$ and such that $\mathrm{diam}
	((G \cup R)[U_k]) \leq 3 \log_2n$; the existence of $U_k$ is 
	justified below.
	
	\item [Step 2.] Let $i < j$ denote the elements of the $k$th pair of $\mathcal{P}$. 
	Scan the members of  $L_i^{(k)}$ according to the assumed
	ordering. Given a vertex $y \in L_i^{(k)}$ considered 
	throughout the scan, expose the edges of $R$ incident to 
	both $y$ and $U_k$. If no edges are revealed, then declare 
	$y$ to be a {\sl failure} (with respect to $S_i$) and proceed to the 
	next vertex 
	in the ordering. Halt upon reaching the first member of 
	$L_i^{(k)}$ that is not a failure (if no such vertex exists, then terminate); 
	denote this vertex by $y_i$. 
	Remove all failed vertices encountered throughout the scan as well as $y_i$ 
	from $L_i^{(k)}$, and let $L_i^{(k+1)}$ denote the 
	resulting set. 
		
	\item [Step 3.] Perform Step~2 over $L_j^{(k)}$; 
	let $y_j$ and $L_j^{(k+1)}$ denote the counterparts of 
	$y_i$ and $L_i^{(k+1)}$ defined in Step~2, respectively 
	(unless termination is reached). 
		
	\item [Step 4.] Connect $y_i$ and $y_j$ via a path 
	$P_{ij} \subseteq G \cup R$ such that $V(P_{ij}) \setminus \{y_i, y_j\} \subseteq U_k$
	and its length is at most $3 \log_2 n$. 
	
	\item [Step 5.] Set $Z_{k+1} := Z_k \sm V(P_{ij})$, increase $k$ by one and return to Step 1. 
\end{enumerate}

It remains to prove that the above probabilistic construction a.a.s. constructs $\binom{\ell}{2}$ paths $P_{ij}$, as defined in Step~4; in order to prove this, it suffices to prove that a.a.s. for every $i \in [\ell]$ there are at most $\ell$ failures with respect to $S_i$. We start by arguing that a.a.s. the set $U_k$, defined in Step~1, exists in each execution of Step~1. To see this, note that if $\binom{\ell}{2}$ paths are ever defined throughout the construction, then at most 
$$
\frac{n}{60 \log_2 n} \cdot 3\log_2 n = 0.05 n
$$ 
vertices are ever removed from $Z$. Consequently, $|Z_k| \geq 0.9 n$ holds throughout the process, allowing for an appeal to Proposition~\ref{prop:diam} in every execution of Step~1.

The probability that a vertex is declared a failure with respect to a given star $S_i$ is at most $(1 - C/n)^{0.8 n} \leq e^{- 0.8 C} \leq 1/4$, where the last inequality is owing to $C$ being sufficiently large. The number of failures seen for a given star $S_i$ throughout is then stochastically dominated by a random variable that is binomially distributed with parameters $2\ell$ and $1/4$. An application of Chernoff's inequality (see e.g.~\cite[Theorem~2.1]{JLR}) then yields
$$
\Pr[\text{Strictly more than $\ell$ failures occur for a given star}] \leq e^{-\Omega(\ell)}.
$$
A union bound over all stars then implies that
$$
\Pr[\text{There exists a star for which strictly more than $\ell$ failures occur}] \leq \ell e^{ - \Omega(\ell)} = o(1),
$$
where the last equality holds since $\ell = \omega(1)$ which is implied by $\delta(G) = \omega(1)$. 
\end{proofof}

\section{Vertex-connectivity of randomly perturbed graphs} \label{sec::VerCon}

In this section, we prove Theorem~\ref{th::VertexCon}. 
Our proof makes use of the following known result due to Mader.

\begin{theorem} [\cite{Mader}] \label{th::Mader}
Every graph of average degree at least $s$ admits an $s/4$-connected subgraph.
\end{theorem}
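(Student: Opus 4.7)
The plan is to prove this classical theorem of Mader by an extremal subgraph argument, in the spirit of the proof appearing in Diestel's textbook. Set $k = \lfloor s/4 \rfloor$, so that $s \geq 4k$; since $(k+1)$-connectivity implies $s/4$-connectivity (as $k + 1 > s/4$), it suffices to exhibit a $(k+1)$-connected subgraph of $G$. Define the family
\[
\mathcal{H} := \{ H \subseteq G : |V(H)| \geq 2k \text{ and } |E(H)| > 2k (|V(H)| - k) \}.
\]
The hypothesis $\bar d(G) \geq s$ gives $|E(G)| \geq s |V(G)|/2 \geq 2k |V(G)| > 2k(|V(G)| - k)$, so $G \in \mathcal{H}$. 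I would then pick $H \in \mathcal{H}$ with $|V(H)|$ minimum and argue that this $H$ is itself $(k+1)$-connected.

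The first sub-claim is $\delta(H) \geq 2k + 1$. If some $v \in V(H)$ had $\deg_H(v) \leq 2k$, then
\[
|E(H - v)| \geq |E(H)| - 2k > 2k(|V(H)| - k) - 2k = 2k(|V(H - v)| - k).
\]
The boundary case $|V(H)| = 2k$ is ruled out directly, since it would force $|E(H)| > 2k^2$ while $\binom{2k}{2} < 2k^2$. Hence $|V(H-v)| \geq 2k$, placing $H - v$ in $\mathcal{H}$ and contradicting the minimality of $|V(H)|$.

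The main sub-claim is that $H$ is $(k+1)$-connected. Suppose for contradiction that $H$ has a vertex cut $S$ with $|S| \leq k$, and let $A, B$ denote the sides, i.e., $A \cup B = V(H)$, $A \cap B = S$, and there are no edges between $A \sm S$ and $B \sm S$. Since $\delta(H) \geq 2k + 1$ and the neighbourhood of any $v \in A \sm S$ is contained in $A$, we have $|A|, |B| \geq 2k + 2$, and strictly $|A|, |B| < |V(H)|$ (as both $A \sm S$ and $B \sm S$ are non-empty). If neither $H[A]$ nor $H[B]$ belonged to $\mathcal{H}$, then $|E(H[A])| \leq 2k(|A|-k)$ and $|E(H[B])| \leq 2k(|B|-k)$, whence
\[
|E(H)| \leq |E(H[A])| + |E(H[B])| \leq 2k(|A| + |B|) - 4k^2 = 2k(|V(H)| + |S|) - 4k^2 \leq 2k(|V(H)| - k),
\]
where the final inequality uses $|S| \leq k$; this contradicts $H \in \mathcal{H}$. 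Hence at least one of $H[A], H[B]$ lies in $\mathcal{H}$, contradicting the minimality of $|V(H)|$.

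The main obstacle is calibrating the defining inequality of $\mathcal{H}$: the threshold $2k(|V(H)| - k)$ has to be loose enough that $G \in \mathcal{H}$, yet tight enough that splitting $H$ at any cut of size at most $k$ cannot preserve membership on both sides. The coefficient $2k$ is forced by the minimum-degree step (removing a vertex of degree at most $2k$ must still keep us inside $\mathcal{H}$), and the shift $-2k \cdot k$ is exactly what allows the separator contribution $2k|S|$ to cancel in the last display — this is precisely where the factor of $4$ in ``$s/4$-connected'' enters.
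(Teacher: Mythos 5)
Your argument is correct: it is the classical extremal proof of Mader's theorem (essentially the one in Diestel's textbook, with the edge-density threshold $\varepsilon(G)$ replaced by the cleaner constant $2k$), and the paper itself does not prove this statement at all --- it simply cites Mader's 1972 paper, so there is no internal proof to compare against. Your two sub-claims (minimum degree at least $2k+1$ for a vertex-minimal member of $\mathcal{H}$, and the impossibility of a cut of size at most $k$ because one side would again lie in $\mathcal{H}$) are exactly the right steps, and the bookkeeping in both displays checks out, including the boundary exclusion $|V(H)|=2k$ via $\binom{2k}{2}<2k^2$ and the cancellation $2k|S|-4k^2\leq -2k^2$. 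The only point worth adding is the degenerate case $k=\lfloor s/4\rfloor=0$ (i.e.\ $s<4$): there the inequality $2k|V(G)|>2k(|V(G)|-k)$ fails and $G\notin\mathcal{H}$, so your argument does not start; but in that regime the theorem is trivial, since average degree $s>0$ forces an edge and $K_2$ is $1$-connected with $1\geq s/4$ (and for $s\leq 0$ there is nothing to prove). With that one-line remark, the proof is complete and self-contained, which is arguably a small service beyond the paper, where the result is used as a black box (with $s$ equal to a large minimum degree, so the degenerate case never arises).
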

 
\begin{proofof}{Theorem~\ref{th::VertexCon}}
Starting with (a), let $\{V_1, \ldots, V_t\}$ be a family of pairwise-disjoint subsets of $V$ such that the subgraph $G[V_i]$ is $s/16$-connected for every $i \in [t]$, and $t$ is maximal for any family with this property (note that $t \geq 1$ holds by Theorem~\ref{th::Mader}). Observe that, in particular, $|V_i| \geq s/16$ for every $i \in [t]$ and thus $t \leq 16 n/s$. Let $W = V \setminus (V_1 \cup \ldots \cup V_t)$ and let $r = |W|$. We claim that $G[W]$ is $s/4$-degenerate. Indeed, suppose for a contradiction that there exists a set $Z \subseteq W$ such that the minimum degree in $G[Z]$ is at least $s/4$. Hence, by Theorem~\ref{th::Mader}, there exists a set $V_{t+1} \subseteq Z$ such that $G[V_{t+1}]$ is $s/16$-connected, contrary to the maximality of $t$. Let $(w_1, \ldots, w_r)$ be a degeneracy ordering of the elements of $W$, that is, $\deg_G \left(w_i, \bigcup_{j=1}^t V_j \cup \{w_1, \ldots, w_{i-1}\} \right) \geq 3s/4$ holds for every $i \in [r]$.

Given a non-trivial partition $I_1 \cup I_2$ of $[t]$, define sequences of sets $A_0 \subseteq A_1 \subseteq \ldots \subseteq A_r$ and $B_0 \subseteq B_1 \subseteq \ldots \subseteq B_r$, such that $A_i \cup B_i$ is a partition of $\bigcup_{j=1}^t V_j \cup \{w_1, \ldots, w_i\}$ for every $i \in [r]$, as follows. Set $A_0 = \bigcup_{i \in I_1} V_i$ and $B_0 = \bigcup_{i \in I_2} V_i$. Suppose we have already defined $A_0, \ldots, A_{i-1}$ and $B_0, \ldots, B_{i-1}$ for some $i \in [r]$, and now wish to define $A_i$ and $B_i$. If $\deg_G(w_i, A_{i-1}) \geq 3s/8$, then set $A_i = A_{i-1} \cup \{w_i\}$ and $B_i = B_{i-1}$, otherwise set $A_i = A_{i-1}$ and $B_i = B_{i-1} \cup \{w_i\}$. Observe that, given the partition $[t] = I_1 \cup I_2$, the sets $A_r$ and $B_r$ are uniquely determined. A graph $H$ with the same vertex-set as $G$ is said to satisfy the property ${\mathcal M}_k$ if for every non-trivial partition $[t] = I_1 \cup I_2$ there is a matching in $H$ of size $k$ such that each of its edges has one endpoint in $A_r$ and the other in $B_r$.

Let $R \sim {\mathbb G}(n,p)$, where $p := p(n) \geq \frac{c (k + \log(n/s))}{n s}$. Fix an arbitrary non-trivial partition $[t] = I_1 \cup I_2$, and let $A_r \cup B_r$ be the corresponding partition of $V$. If $R$ does not admit a matching of size $k$ between $A_r$ and $B_r$, then it admits an inclusion maximal matching $M$ between $A_r$ and $B_r$ of size $i$ for some $0 \leq i \leq k-1$; by maximality $E_R(A_r \setminus V(M), B_r \setminus V(M)) = \emptyset$. Assuming first that $k \geq \log(n/s)$, the probability of the latter event occurring is at most 
\begin{align*}
\sum_{i=0}^{k-1} \binom{|A_r| |B_r|}{i} p^i (1-p)^{(|A_r| - i) (|B_r| - i)} &\leq (1-p)^{|A_r| |B_r|} + \sum_{i=1}^{k-1} \left(\frac{e |A_r| |B_r| p}{i} \right)^i e^{- p (|A_r| - i) (|B_r| - i)} \\
&\leq e^{- p |A_r| |B_r|} + \sum_{i=1}^{k-1} \exp \{i \log (e |A_r| |B_r| p/i) - p (|A_r| - i) (|B_r| - i)\} \\
&\leq k \exp \left \{ k \log (e |A_r| |B_r| p/k) - p |A_r| |B_r|/300 \right\} \\
&\leq k \exp \left \{ k \left[\log \left(\frac{2 c e |A_r| |B_r|}{n s} \right) - \frac{c |A_r| |B_r|}{300 n s} \right] \right\} \\ 
&\leq k \exp \left \{ - \frac{c k |A_r| |B_r|}{600 n s} \right\} \leq k e^{- c' k |I_1|} \leq e^{- c'' |I_1| \log(n/s)},
\end{align*} 
where in the third inequality we use the fact that $f(i) := i \log(a/i)$ is increasing for $1 \leq i \leq \min \{k, a/3\}$, the fact that $\min \{|A_r|, |B_r|\} \geq s/16$, and our assumption that $k \leq s/17$; the fourth inequality holds by our assumption that $k \geq \log(n/s)$; the fifth inequality holds for a sufficiently large constant $c$ since $|A_r| |B_r| \geq n s/32$; in the sixth inequality we assume without loss of generality that $|A_r| \leq |B_r|$; additionally $c', c''$ are sufficiently large constants depending on $c$. 

Similarly, if $k < \log(n/s)$, then
\begin{align*}
\sum_{i=0}^{k-1} \binom{|A_r| |B_r|}{i} p^i (1-p)^{(|A_r| - i) (|B_r| - i)} &\leq k \exp \left \{ k \log (e |A_r| |B_r| p/k) - p |A_r| |B_r|/300 \right\} \\
&\leq k \exp \left \{ k \left[\log \left(\frac{2 c e |A_r| |B_r| \log(n/s)}{k n s} \right) - \frac{c |A_r| |B_r| \log(n/s)}{300 k n s} \right] \right\} \\ 
&\leq k \exp \left \{ - \frac{c |A_r| |B_r| \log(n/s)}{600 n s} \right\} \leq e^{- c'' |I_1| \log(n/s)},
\end{align*}

Either way, a union bound over all choices of $\emptyset \neq I_1 \subsetneq [t]$ then implies that the probability that $R$ does not satisfy ${\mathcal M}_k$ is at most
\begin{align*}
\sum_{i=1}^{t-1} \binom{t}{i} e^{- c'' i \log(n/s)} \leq \sum_{i=1}^{\infty} e^{i [\log(16 n/s) - c'' \log(n/s)]}  = o(1),
\end{align*}
where in the inequality above we use the fact that $t \leq 16n/s$, and in the equality above we use the fact that $c''$ is sufficiently large and the assumption $s = o(n)$.

It thus suffices to prove that if $R$ satisfies ${\mathcal M}_k$, then $G \cup R$ is $k$-connected. Assume then that $R$ satisfies ${\mathcal M}_k$ and suppose for a contradiction that there exists a set $S \subseteq V$ of size $k-1$ such that $(G \cup R) \setminus S$ is disconnected. Let $S$ be such a set and let $A \cup B$ be a non-trivial partition of $V \setminus S$ such that $E_{G \cup R}(A, B) = \emptyset$. Since $\kappa((G \cup R)[V_i]) \geq \kappa(G[V_i]) \geq s/16 \geq k$ for every $i \in [t]$, it follows that for every $i \in [t]$ either $V_i \subseteq A \cup S$ or $V_i \subseteq B \cup S$. This defines some partition $I_1 \cup I_2$ of $[t]$, where $I_1 = \{i \in [t] : V_i \subseteq A \cup S\}$ and $I_2 = \{i \in [t] : V_i \subseteq B \cup S\}$. We claim that this partition is non-trivial. Indeed, without loss of generality, suppose for a contradiction that $I_1 = [t]$ and $I_2 = \emptyset$. Let $u \in B$ be the first vertex according to the degeneracy ordering of $W$; such a vertex exists since $B \neq \emptyset$ and $B \subseteq W$. Since $E_G(A, B) = \emptyset$, it follows that $\deg_G(u, S) \geq 3s/4 > |S|$, which is clearly a contradiction. For every vertex $w_i \in A \cap W$, it holds that $\deg_G(w_i, B) = 0$ and thus $\deg_G(w_i, A \cap (V_1 \cup \ldots \cup V_t \cup \{w_1, \ldots, w_{i-1}\})) \geq 3s/4 - k \geq 3s/8$, implying that $w_i \in A_r$ (here $A_r$ is the set defined above, corresponding to $I_1$). Similarly, $w_i \in B \cap W$ implies $w_i \in B_r$. Therefore, the partition $A_r \cup B_r$ of $V$ induces a partition $S_1 \cup S_2$ of $S$ such that $A \cup S_1 = A_r$ and $B \cup S_2 = B_r$. Since $R$ satisfies ${\mathcal M}_k$ by assumption, and $|S| < k$, it follows that $E_{G \cup R}(A, B) \neq \emptyset$, a contradiction.

Next, we prove (b). Let $G_0$ be the disjoint union of $r := \lfloor n/(s+1) \rfloor$ cliques, $Q_1, \ldots, Q_r$, where $s+1 \leq |V(Q_i)| \leq (1 + o(1)) s$ for every $i \in [r]$; then $\delta(G_0) \geq s$. Let $H$ be some $n$-vertex graph such that $e(H) < k \lfloor n/(s+1) \rfloor /2$. Then $e_H(V_i, V(G_0) \setminus V_i) < k$ holds for some $i \in [r]$, implying that $G_0 \cup H$ is not $k$-connected; this concludes the proof of the first part of the statement. As for its second part, let $R \sim \mathbb{G}(n,p)$, where $p := p(n) \leq \frac{\tilde{c} \log(n/s)}{n s}$ for some sufficiently small constant $\tilde{c} > 0$. Consider the following auxiliary random graph $H$: its vertex-set is $\{u_1, \ldots, u_r\}$ and for every $1 \leq i < j \leq r$ there is an edge of $H$ connecting $u_i$ and $u_j$ if and only if $E_R \left(V(Q_i), V(Q_j) \right) \neq \emptyset$. Observe that $|V(H)| = r$ tends to infinity with $n$, as $s = o(n)$ by assumption. For every $1 \leq i < j \leq r$ it holds that 
\begin{align*}
\mathbb{P} \left(E_R \left(V(Q_i), V(Q_j) \right) \neq \emptyset \right) &= 1 - (1-p)^{\left|V(Q_i) \right| \left|V(Q_j) \right|} \leq 1 - (1-p)^{(1 + o(1)) s^2} \leq 1 - e^{- (1 + o(1)) p s^2} \\ 
&\leq (1 + o(1)) p s^2 \leq \frac{(1 + o(1)) \log (n/s)}{n/s} \leq \frac{(1 - o(1)) \log r}{r} \,.
\end{align*}
It follows that a.a.s. $u_i$ is isolated in $H$ for some $i \in [r]$, implying that $G_0 \cup R$ is a.a.s. disconnected.
\end{proofof}

\section{The diameter of randomly perturbed graphs} \label{sec::diameter}

In this section we prove Theorem~\ref{th::Diameter}. We begin by stating two results that facilitate our proof. The first is a standard result in the theory of random graphs. %; the first was mentioned in the introduction.  

\begin{theorem} [Theorem 7.2 in~\cite{FK}] \label{th::DiameterGnp}
If $p = \omega(\log n/n)$, then a.a.s. $\emph{diam}(\mathbb{G}(n,p)) = \frac{(1 + o(1)) \log n}{\log(n p)}$.
\end{theorem}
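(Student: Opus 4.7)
The plan is to establish matching asymptotic upper and lower bounds on $\mathrm{diam}(\mathbb{G}(n,p))$ via a BFS analysis: a first-moment bound for the lower bound and concentration of BFS frontier sizes for the upper bound. Throughout, let $G \sim \mathbb{G}(n,p)$ with $p = \omega(\log n/n)$, and for $v \in [n]$ and an integer $r \geq 0$ let $N_r(v)$ denote the set of vertices at distance at most $r$ from $v$ in $G$.

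For the lower bound, I would fix any constant $\eps \in (0, 1)$, set $d^- := \lfloor (1 - \eps) \log n / \log(np) \rfloor$, and show that $\mathrm{diam}(G) > d^-$ holds a.a.s. Since any fixed vertex $u \neq v$ lies at distance exactly $r$ from $v$ with probability at most $n^{r-1} p^r$ (there are at most $n^{r-1}$ potential $v$-$u$ walks of length $r$, each realised with probability $p^r$), summing over $u \neq v$ and $1 \leq r \leq d^-$ yields
\[
\Ex\bigl[|N_{d^-}(v)| - 1\bigr] \leq \sum_{r=1}^{d^-} (np)^r \leq 2 (np)^{d^-} = 2 n^{1 - \eps},
\]
where the penultimate inequality uses the geometric-series bound, valid since $np \to \infty$. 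Markov's inequality applied to a single fixed $v$ then gives $|N_{d^-}(v)| < n$ a.a.s., so some vertex lies at distance exceeding $d^-$ from $v$.

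For the upper bound, fix $\eps \in (0,1)$, set $r^* := \lceil (1 + \eps/3)\log n / (2 \log(np)) \rceil$, and $d^+ := 2 r^* + 1$; note that $d^+ \leq (1 + \eps) \log n / \log(np)$ for $n$ large. Given any pair $(u, v)$ of distinct vertices, the plan is to run BFS from $u$ and from $v$ using disjoint edge exposures (Phases~1 and~2), and then, if the two BFS balls do not already meet, to expose the (still-independent, $\mathrm{Bernoulli}(p)$) edges between $N_{r^*}(u)$ and $N_{r^*}(v)$ (Phase~3). Round-by-round Chernoff estimates with a vanishing per-round slack $\eta := \sqrt{4 \log n / (np)} = o(1)$ (valid since $np = \omega(\log n)$) show that with probability at least $1 - n^{-3}$ per starting vertex, every BFS frontier grows by a multiplicative factor of at least $(1 - \eta) np$ per round, so that $|N_{r^*}(u)|, |N_{r^*}(v)| \geq (1 - \eta)^{r^*}(np)^{r^*} \geq n^{1/2}$. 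Conditional on these lower bounds, the probability that Phase~3 finds no connecting edge is at most $(1 - p)^{n} \leq e^{-np} = n^{-\omega(1)}$. A union bound over all $\binom{n}{2}$ pairs $(u, v)$ then establishes $\mathrm{diam}(G) \leq d^+$ a.a.s.

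The main obstacle is controlling the multiplicative loss in the BFS ball-size concentration. In the regime where $np$ is only slightly larger than $\log n$, the per-round slack $\eta = \Theta(\sqrt{\log n /(np)})$ is not small on a power-of-$n$ scale, and the accumulated loss $(1 - \eta)^{r^*}$ could, in principle, cancel the polynomial growth $(np)^{r^*}$. The remedy is to leverage the hypothesis $np = \omega(\log n)$ so that $\eta \cdot r^* = o(\log n)$, making $(1 - \eta)^{r^*} = n^{-o(1)}$; with $r^*$ chosen to carry an $\eps/3$ slack over $\log n / (2 \log(np))$, this $n^{-o(1)}$ loss is absorbed, the two BFS balls still reach size $\geq n^{1/2}$, and the probability of a missing connecting edge remains $n^{-\omega(1)}$, small enough for the final union bound to succeed.
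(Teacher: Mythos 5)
The paper does not prove this statement at all: it is quoted verbatim as Theorem~7.2 of Frieze--Karo\'nski and used as a black box. Your proposal reconstructs what is essentially the standard (and FK's) argument: a first-moment bound on $|N_{d^-}(v)|$ for the lower bound, and BFS ball growth plus a sprinkling step between two large balls for the upper bound. The lower bound as you wrote it is fine. In the upper bound, however, the key independence claim is not correct as stated: after Phases~1 and~2 you cannot treat \emph{all} edges between the balls $N_{r^*}(u)$ and $N_{r^*}(v)$ as unexposed Bernoulli$(p)$, because growing the ball around $v$ already exposes every edge from $N_{r^*-1}(v)$ to the rest of the graph, and conditioning on the two balls being disjoint forces all edges between $N_{r^*-1}(v)$ and $N_{r^*}(u)$ (and symmetrically) to be absent. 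The genuinely unexposed pairs are only those between the two final BFS frontiers $S_{r^*}(u)$ and $S_{r^*}(v)$; fortunately your round-by-round growth estimate bounds exactly the frontier sizes, so replacing ``balls'' by ``frontiers'' in Phase~3 (and specifying Phases~1--2 properly, e.g.\ running the BFS from $v$ inside $V\setminus N_{r^*}(u)$ and noting that touching $N_{r^*}(u)$ already yields a path of length at most $d^+$) repairs the argument with the same bound $(1-p)^{n}=n^{-\omega(1)}$. Relatedly, the blanket claim that every frontier grows by a factor $(1-\eta)np$ needs the usual truncation: the expected growth factor is $\approx np$ only while $p\,|S_k|=o(1)$ and the explored set has size $o(n)$, so you should stop the analysis once the frontier reaches $\sqrt n$ (which is all you need) and also control ball sizes from above to keep the unexplored pool of size $(1-o(1))n$; your numerical constants ($n^{-3}$ per vertex) are also slightly optimistic with $\eta=\sqrt{4\log n/(np)}$, but this is immaterial.

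One further caveat concerns the statement itself rather than your argument: with only $p=\omega(\log n/n)$ the asymptotic equality is false when $np$ is a power of $n$ (e.g.\ for constant $p$ the diameter is $2$ while the formula gives $1+o(1)$), and in that regime your own inequality $d^+=2r^*+1\le(1+\eps)\log n/\log(np)$ breaks down. FK's Theorem~7.2 carries the additional hypothesis that $\log n/\log(np)\to\infty$, which the paper's quotation omits; your proof, like FK's, is correct exactly under that extra assumption, so you should state it.
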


\begin{proposition} \label{prop::DiametertSplit}
Let $G$ be an $n$-vertex graph with minimum degree $k := k(n) \geq 1$. Then there exists a partition $U_1 \cup \ldots \cup U_s$ of $V(G)$ such that, for every $i \in [s]$, the radius of $G[U_i]$ is at most 2 and $|U_i| \geq k+1$.  
\end{proposition}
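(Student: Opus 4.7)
The plan is to construct the desired partition greedily in two phases. In the first phase, I repeatedly carve out closed neighborhoods of size at least $k+1$ from the currently uncovered portion of $V(G)$; in the second phase, I redistribute any ``leftover'' vertices by attaching each one to some already-built part that contains one of its $G$-neighbors.

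Concretely, I would initialize $S := V(G)$ and, while there exists $v \in S$ with $|N_G(v) \cap S| \ge k$, choose such a $v$, designate it as the \emph{center} $v_i$ of a new part
$$
U_i := \{v_i\} \cup (N_G(v_i) \cap S),
$$
and update $S \gets S \setminus U_i$. Each part produced in this way automatically satisfies $|U_i| \ge k+1$, and within $G[U_i]$ the center $v_i$ is adjacent to every other vertex of $U_i$, so $\mathrm{rad}(G[U_i]) \le 1$. Since $\delta(G) \ge k \ge 1$, the loop is entered at least once, producing some $s \ge 1$ initial parts $U_1, \ldots, U_s$.

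Let $T$ denote the residual set of uncovered vertices when the loop exits. By the exit condition, every $w \in T$ has fewer than $k$ $G$-neighbors inside $T$; combined with $\deg_G(w) \ge k$, this forces $w$ to have at least one $G$-neighbor in $U_1 \cup \cdots \cup U_s$. I would then assign each such $w$ to any $U_i$ containing one of its neighbors (ties broken arbitrarily). The result is a partition of $V(G)$, and since sizes only grow in this phase, the bound $|U_i| \ge k+1$ is preserved. The radius bound is preserved as well: any newly attached $w$ has some $G$-neighbor $u \in U_i$, and since $u$ is at distance at most $1$ from $v_i$ in $G[U_i]$, the enlarged subgraph $G[U_i \cup \{w\}]$ contains the path $v_i u w$, giving $\mathrm{dist}_{G[U_i]}(v_i, w) \le 2$. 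Hence $\mathrm{rad}(G[U_i]) \le 2$ throughout.

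The only real subtlety is verifying that Phase 2 does not spoil the radius bound established in Phase 1, and the observation above handles this: each leftover vertex ends up as a ``distance-$2$ leaf'' around the original center of some part. Apart from this bookkeeping check, no genuine combinatorial obstacle arises.
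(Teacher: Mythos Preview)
Your proof is correct and is essentially the paper's argument. The paper phrases Phase~1 as taking an inclusion-maximal collection of pairwise vertex-disjoint stars in $G$, each with at least $k$ edges, rather than as an explicit greedy loop, but the content is identical: maximality (equivalently, your exit condition) forces every leftover vertex to have a neighbour in some already-built star, and attaching it there yields distance at most $2$ from the corresponding centre.
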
 

\begin{proof}
Let $H_1, \ldots, H_s$ be an inclusion maximal collection of pairwise vertex-disjoint stars in $G$, each having at least $k$ edges. For every $i \in [s]$ let $V_i = V(H_i)$ and let $v_i$ denote the centre of $H_i$. Let $H = G \setminus (H_1 \cup \ldots \cup H_s)$. It follows by the maximality of the family $H_1, \ldots, H_s$ that $\deg_H(u) < k \leq \deg_G(u)$ holds for every $u \in V(H)$. Hence, for every $u \in V(H)$ there exists some $i \in [s]$ such that $N_G(u) \cap V_i \neq \emptyset$; add $u$ to $V_i$ (if there is more than one such set $V_i$, then choose one arbitrarily). For every $i \in [s]$ denote the extension of $V_i$ thus obtained by $U_i$. It is then evident that $U_1 \cup \ldots \cup U_s$ is a partition of $V(G)$ and that $|U_i| \geq |V_i| \geq k+1$ for every $i \in [s]$. Moreover, for every $i \in [s]$, it follows by construction that $\textrm{dist}_{G[U_i]}(u, v_i) \leq 2$, implying that the radius of $G[U_i]$ is at most 2.   
\end{proof}  

\begin{proofof}{Theorem~\ref{th::Diameter}}
Starting with (a), let $R \sim \mathbb{G}(n,p)$, where $p := p(n) = \omega \left(\frac{\log (n/k)}{n k} \right)$. Let $U_1 \cup \ldots \cup U_s$ be a partition of $V(G)$ as in the statement of Proposition~\ref{prop::DiametertSplit}. For every $i \in [s]$ let $A^i_1 \cup \ldots \cup A^i_{m_i}$ be an arbitrary partition of $U_i$ such that $k \leq |A^i_j| \leq 2k$ for every $j \in [m_i]$; such a partition exists since $|U_i| \geq k+1$ holds by Proposition~\ref{prop::DiametertSplit}. Observe that, given two vertices $x, y \in A^i_j$, it may hold that $\textrm{dist}_{G[A^i_j]}(x,y) = \infty$, but $\textrm{dist}_G(x,y) \leq \textrm{dist}_{G[U_i]}(x,y) \leq 4$ holds by Proposition~\ref{prop::DiametertSplit}. Let $\mathcal{S} = \{(i,j) : i \in [s], j \in [m_i]\}$ and let $m = |\mathcal{S}|$; note that $n/(2k) \leq m \leq n/k$. Consider the following auxiliary random graph $H$: its vertex set is $\mathcal{S}$, and for every $(i_1, j_1), (i_2, j_2) \in \mathcal{S}$ there is an edge of $H$ connecting $(i_1, j_1)$ and $(i_2, j_2)$ if and only if $E_R \left(A^{i_1}_{j_1}, A^{i_2}_{j_2} \right) \neq \emptyset$. Observe that $|V(H)| = m$ tends to infinity with $n$ owing to $k = o(n)$ assumed in the premise. Additionally, 
\begin{align*}
\mathbb{P} \left(E_R \left(A^{i_1}_{j_1}, A^{i_2}_{j_2} \right) \neq \emptyset \right) &= 1 - (1-p)^{\left|A^{i_1}_{j_1} \right| \left|A^{i_2}_{j_2} \right|} \geq 1 - (1-p)^{k^2} \geq 1 - e^{- p k^2}\\ 
&\geq \min \{1/2, p k^2/2\} = \omega \left(\frac{\log (n/k)}{n/k} \right) = \omega \left(\frac{\log m}{m} \right)
\end{align*}
holds for every $(i_1, j_1), (i_2, j_2) \in \mathcal{S}$, independently of all other pairs. Recalling that $q = 1 - (1-p)^{k^2}$, it thus follows by Theorem~\ref{th::DiameterGnp} that a.a.s.
\begin{equation} \label{eq::auxDiam}
\textrm{diam}(H) \leq \textrm{diam}(\mathbb{G}(m,q)) \leq \frac{(1 + o(1)) \log m}{\log(m q)} \leq \frac{(1 + o(1)) \log (n/k)}{\log(n q/(2k))}.
\end{equation}

Fix any two vertices $x, y \in V(G)$. Let $(i_1, j_1) (i_2, j_2) \ldots (i_r, j_r)$ be a shortest path in $H$, where $x \in A^{i_1}_{j_1}$ and $y \in A^{i_r}_{j_r}$. For every $\ell \in [r-1]$, let $u_{\ell} \in A^{i_{\ell}}_{j_{\ell}}$ and $v_{\ell} \in A^{i_{\ell+1}}_{j_{\ell+1}}$ be vertices for which $u_{\ell} v_{\ell} \in E(G)$ (note that it is possible that $u_{\ell+1} = v_{\ell}$ for some values of $\ell$). Then $x P_1 u_1 v_1 P_2 u_2 \ldots v_{r-1} P_r y$ is a path in $G$, where each $P_i$ is a shortest (possibly trivial) path in $G$ between the corresponding endpoints. Since each $P_i$ is of length at most 4 by Proposition~\ref{prop::DiametertSplit} and since $x, y \in V(G)$ are arbitrary vertices, it follows by~\eqref{eq::auxDiam} that a.a.s. $\textrm{diam}(G \cup R) \leq \textrm{diam}(H) + 4 (\textrm{diam}(H) + 1) \leq \frac{(5 + o(1)) \log (n/k)}{\log (n q/(2k))}$ as claimed. 

%Since, moreover, $\textrm{dist}_{G \cup R}(x,y) \leq \textrm{dist}_G(x,y) \leq 4$ holds by Proposition~\ref{prop::DiametertSplit} for every $i \in [s]$, $j \in [m_i]$, and $x, y  \in A^i_j$, it follows that a.a.s. $\textrm{diam}(G \cup R) \leq \textrm{diam}(H) + 4 (\textrm{diam}(H) + 1) \leq \frac{(5 + o(1)) \log (n/k)}{\log (n q/(2k))}$ as claimed.

Next, we prove (b). Let $G_0$ be the disjoint union of $r := \lfloor n/(k+1) \rfloor$ cliques $Q_1, \ldots, Q_r$, where $k+1 \leq |V(Q_i)| \leq (1 + o(1)) k$ for every $i \in [r]$; then $\delta(G_0) \geq k$. Let $H$ be the auxiliary random graph with vertex-set $\{u_1, \ldots, u_r\}$ such that $u_i u_j \in E(H)$ if and only if $E_R(V(Q_i), V(Q_j)) \neq \emptyset$. Similar calculations to the ones made in Part (a) of this proof show that a.a.s.
$$
\textrm{diam}(G_0 \cup R) \geq \textrm{diam}(H) \geq \textrm{diam}(\mathbb{G}(r, (1 + o(1)) q)) \geq \frac{(1 - o(1)) \log (n/k)}{\log (n q/k)}.
$$ 
\end{proofof}

%\section{Concluding remarks}

%We proved.....

\section*{Acknowledgements}

We thank Noga Alon for helpful comments.

\end{document}